\newtheorem{theorem}{Theorem}
\newtheorem{claim}{Claim}
\newtheorem{remark}{Remark}
\newcommand{\identityf}[1]{\mathbf 1_{\{#1\}}}
\newlength{\dhatheight}
\newcommand{\mbf}[1]{\text{\boldmath{$#1$}}}
\begin{document}

\title{Robust Multi-Hypothesis Testing with Moment-Constrained Uncertainty Sets}

\author{Akshayaa Magesh, Zhongchang Sun, Venugopal V. Veeravalli, Shaofeng Zou
\thanks{A. Magesh and V.V. Veeravalli are with the University of Illinois at Urbana-Champaign, IL, 61820, USA. (email: amagesh2@illinois.edu, vvv@illinois.edu).
Z. Sun and S. Zou are with the University at Buffalo, the State University of New York, NY, 14260, USA. (email: {zhongcha@buffalo.edu}, {szou3@buffalo.edu}).}
\thanks{The work of Z. Sun and S. Zou was supported in part by the National Science Foundation
under Grants 2106560 and 2112693. The work of A. Magesh and V.V. Veeravalli was supported by the National Science Foundation under grant \#2106727.}
%\thanks{Preliminary results from this work were presented at the International Conference on Acoustics, Speech, and Signal Processing (ICASSP) 2023.}
}

% The paper headers
%\markboth{Journal of \LaTeX\ Class Files,~Vol.~14, No.~8, August~2021}%
%{Shell \MakeLowercase{\textit{et al.}}: A Sample Article Using IEEEtran.cls for IEEE Journals}

%\IEEEpubid{0000--0000/00\$00.00~\copyright~2021 IEEE}
% Remember, if you use this you must call \IEEEpubidadjcol in the second
% column for its text to clear the IEEEpubid mark.

\maketitle

\begin{abstract}
The problem of robust multi-hypothesis testing in the Bayesian setting is studied in this paper. Under the $m \geq 2$ hypotheses, the data-generating distributions are assumed to belong to uncertainty sets constructed through some moment functions, i.e., the sets contain distributions whose moments are centered around empirical moments obtained from some training data sequences. The goal is to design a test that performs well under all distributions in the uncertainty sets, i.e., a test that minimizes the worst-case probability of error over the uncertainty sets. 
%In the special case of binary hypothesis testing ($m = 2$), 
Insights on the need for optimization-based approaches to solve the robust testing problem with moment constrained uncertainty sets are provided. The optimal (robust) test based on the optimization approach is derived for the case where the observations belong to a finite-alphabet. When the size of the alphabet is infinite, the optimization problem is infinite-dimensional and intractable, and therefore a tractable finite-dimensional approximation is proposed, whose optimal value converges to the optimal value of the original problem as the size of the dimension {of the approximation} goes to infinity.
%\vvv{[This sentence is confusing. Needs to be rewritten]}
A robust test is constructed from the solution to the approximate problem, and guarantees on its worst-case error probability over the uncertainty sets are provided. Numerical results are provided to demonstrate the performance of the proposed robust test.
\end{abstract}

\begin{IEEEkeywords}
Distributional Uncertainty, Bayesian Setting, Infinite-Dimensional Optimization, Robust Optimization. %Tractable and Convergent Approximation
\end{IEEEkeywords}

\section{Introduction}
\IEEEPARstart{H}{ypothesis} testing is a fundamental problem in statistical  decision-making, in which the goal is to decide between given hypotheses based on observed data. In the classical setting of binary hypothesis testing ($m = 2$), the two hypotheses are referred to as the null and alternative hypotheses. The likelihood ratio test between the two hypotheses is optimal {under various criteria (see, e.g., \cite{moulin2018statistical}).} The multi-hypothesis testing problem is an extension of binary hypothesis testing to $m \geq 2$ hypotheses. In this case, without loss of generality, the likelihood ratios of the $m$ hypotheses with respect to the first one is used to construct {optimal tests}. {Multi-hypothesis testing} problems have a wide range of applications in economics, communications, signal processing and life sciences. 

In general, the distributions under the hypotheses may be unknown, and may need to be estimated from historical data. However, deviations of the estimates from the true underlying distributions can result in significant performance degradation of the likelihood ratio test constructed using the estimated distributions. The \emph{robust} hypothesis testing framework was proposed in Huber's seminal work \cite{huber1965robust} to address this problem. In the robust setting, it is assumed that under each hypothesis, the distributions belong to certain uncertainty sets, and the goal is to build a detector that performs well under all distributions in the uncertainty sets. The uncertainty sets are generally constructed as collections of distributions that lie within some neighbourhood (for instance, with respect to some discrepancy measure) of certain \emph{nominal} distributions. 

The uncertainty sets in \cite{huber1965robust} were constructed as epsilon-contamination sets, i.e., each uncertainty set contains mixture distributions where the nominal distribution is corrupted by some unknown distribution. A censored likelihood ratio test was proposed and proved to be minmax optimal, {i.e., the test minimizes the worst-case error probability over the uncertainty sets.} The worst-case distributions, i.e., the distributions belonging to the uncertainty sets under which the error is maximized, were also characterized in \cite{huber1965robust} for epsilon-contamination sets. {Subsequently, minmax optimal tests through the characterization of the worst-case distributions were developed for uncertainty sets based on alternating capacities of order 2, Prokhorov neighbourhoods and bounded densities (see, e.g., \cite{rieder1977least,osterreicher1978on, bednarski1981on,hafner1982prokhorov, kassam1981robust,hafner1993construction,vastola1984ppoint})}. Moment-constrained uncertainty sets under the asymptotic Neyman-Pearson setting were studied in \cite{pandit2004asymptotic} for the case where the observations belong to a finite alphabet. 

In \cite{huber1965robust}, Huber additionally characterized a sufficient condition that needs to be satisfied by the worst-case distributions for general uncertainty classes in the binary hypothesis testing setting, which was later formalized as the Joint Stochastic Boundedness (JSB) {condition \cite{vvv1994minmax} (see also \cite{moulin2018statistical})}. This condition is quite powerful, as the optimal minmax test for a single sample or independent batch samples can be directly constructed from the worst-case distributions satisfying the JSB condition. {However, except for certain  structured uncertainty classes such as} epsilon-contamination classes, Total Variation (TV) constrained classes, classes with alternating capacities of order 2, and bounded density variations, it is quite difficult to verify the JSB condition. Alternate optimization based approaches to solve the robust hypothesis problem were {therefore} explored in \cite{levy2009robust, gul2017minmax, jin2020adversarial} for uncertainty classes constructed using f-divergences and adversarial perturbations.

Recent works have studied the problem of constructing uncertainty sets using a data-driven approach \cite{gao2018wasserstein, xie2021robust, wang2022data, sun2022kernel}, where the nominal distributions are the empirical distributions derived from sequences of training observations. Note that f-divergences are not useful in constructing uncertainty sets using a data-driven approach, since the resulting sets would contain only distributions supported on the empirical samples \cite{xie2021robust}. 
In \cite{gao2018wasserstein, xie2021robust}, the Wasserstein distance was used to construct the uncertainty sets {for a minmax robust Bayesian hypothesis testing problem}. The main result in \cite{xie2021robust} states that for the Wasserstein uncertainty sets, there exists a pair of worst-case distributions supported on the empirical samples. {Furthermore, a detection test using such a pair of worst-case distributions was proposed in \cite{xie2021robust}.} The work in \cite{wang2022data} studied the data-driven robust hypothesis testing problem with the uncertainty sets constructed using the Sinkhorn distance, where a test for minimizing the worst-case loss using an approximated smoothed error probability was proposed. In \cite{sun2022kernel}, the Maximum Mean Discrepancy (MMD) was used to construct the uncertainty sets. In the Bayesian setting, a tractable approximation to the minmax problem was proposed. Additionally, in the Neyman-Pearson setting, an asymptotically optimal test was proposed. The performances of the proposed tests in \cite{xie2021robust, wang2022data, sun2022kernel} were evaluated empirically, and theoretical guarantees on the worst-case error over the distributions in the uncertainty sets were absent. Note that all the above-mentioned works \cite{huber1965robust, rieder1977least,osterreicher1978on, bednarski1981on,hafner1982prokhorov, kassam1981robust,hafner1993construction,vastola1984ppoint, pandit2004asymptotic, levy2009robust, jin2020adversarial, gul2017minmax, gao2018wasserstein, xie2021robust, wang2022data, sun2022kernel} consider only the binary hypothesis testing problem. 

A related field of study with a rich literature is Distributionally Robust Optimization (DRO) \cite{rahimian2019distributionally}. The DRO problem has been studied with uncertainty sets constructed using f-divergences \cite{ben2013robust, hanasusanto2013robust,Zhang2024aaai}, Wasserstein distance \cite{mohajerin2018data, gao2018wasserstein, lee2018minmax}, Total Variation (TV) distance \cite{shapiro2017distributionally}, contamination classes \cite{duchi2019distributionally}, moment functions \cite{delage2010distributionally, xu2018distributionally} and kernel methods \cite{zhu2020kernel}. Methods developed for DRO problems are not directly applicable to robust hypothesis testing problems. Firstly, while there is only a single ambiguity set in DRO problems, there are $m\geq2$ such sets in robust {hypothesis testing} problems. This causes most algorithms that use first-order methods to solve DRO problems to be inapplicable in the robust {hypothesis} testing. Secondly, the decision variable is assumed to be a finite-dimensional vector in DRO problems, while the decision function or the detection test is generally infinite dimensional in the robust hypothesis testing, which makes {the latter problem intractable in general}. Thus,  it is of interest to develop optimization based approaches to solve the robust hypothesis testing problem.

\subsection{Our Contributions}
In this paper, we study the problem of robust multi-hypothesis testing with general moment-constrained uncertainty sets, i.e., the sets contain distributions whose moments are centered around empirical moments. %To the best of our knowledge, ours is the first work to address the robust multi-hypothesis testing setting. 
Mean constrained and variance constrained uncertainty sets can be viewed as special cases of this setting. Moment constrained sets are practical since it is generally computationally easier to calculate empirical moments, and the convergence rates  of empirical moments to the true moments are faster than that of the Wasserstein or MMD distances between the empirical distributions and the underlying true distributions. Moment functions have been extensively used in statistics to model distributions, and constructing uncertainty sets using multiple moment functions provides higher flexibility in modelling the robust problem. Our focus is on the minmax robust formulation in the Bayesian setting. {To the best of our knowledge, ours is the first work to address the robust multi-hypothesis testing setting for more than two hypotheses.}

\begin{itemize}
    \item In the basic case of robust binary hypothesis testing ($m = 2$), we provide insights on the need for optimization based approaches in solving the moment constrained robust hypothesis testing problem by showing that the moment-constrained uncertainty sets do not satisfy the JSB condition. This provides a justification for studying optimization based approaches to solve the problem. %robust testing problem. 
    \item We characterize the minmax optimal robust test for the case when the distributions under the $m$ hypotheses are supported on a finite alphabet set $\mathcal{X}$. 
    \item We then extend the study to the case when the size of $\mathcal{X}$ is infinite\footnote{The infinite-alphabet case includes both the continuous-alphabet and discrete infinite-alphabet cases.}. In this case, we provide a tractable approximation of the worst-case error that converges to the optimal minmax Bayes error, and propose a robust test that generalizes to the entire alphabet.
    \item We provide robust guarantees on the worst-case error of the proposed test over all distributions in the uncertainty sets.  To the best of our knowledge, ours is the first work to provide such guarantees. 
    \item  We provide numerical results to demonstrate the performance of our proposed algorithms. 
\end{itemize}

Some preliminary results that considered only the binary hypothesis testing setting were presented at \cite{magesh2023robust}.

\section{Problem Setup}

Let the random variable $X$ denote a single observation, and let $\mathcal{X} \subset \mathbb{R}^d$ be a compact set denoting the corresponding sample space, where $d$ is the dimension of the data. 
%\vvv{[You haven't defined the notation for a generic observation yet. Also, you should use upper case for random variables (e.g., $X$) and lower case for realizations (e.g., $x$).]}
Let $\mathcal{P}$ denote the set of all Borel probability measures on $\mathcal{X}$. Let $\mathcal{P}_1, \ldots, \mathcal{P}_m \subset \mathcal{P}$ denote the uncertainty sets under the $m$ hypotheses, respectively. We construct the uncertainty sets using general moment constraints derived  from observations from the hypotheses. In this paper, we use the notation $[n]$ to denote the set $\{1, \ldots, n\}$ for any $n \in \mathcal{N}$. Let $\mbf{\hat{x}_i} = (\hat{x}_{i,1}, \ldots, \hat{x}_{i,n_i})$ for $i \in [m]$
%\vvv{[Are you using a straight boldface font here on purpose? If not, make it slanting. Also need uppercase and boldfaced for random vectors, e.g.,  $\mbf{x_1}$]} 
denote the realizations of training sequences under the $m$ hypotheses. Let $\hat{Q}_1, \ldots, \hat{Q}_m$ denote the empirical distributions corresponding to the training observations, i.e., 
\[
\hat{Q}_i = \frac{1}{n_i}\sum_{j=1}^{n_i}\delta(\hat{x}_{i,j}),
\]
where $\delta(x)$ is the Dirac measure on $x$.
%\vvv{[Why are you defining the Dirac measure here? It doesn't appear anywhere on this page. Maybe you should use it to define $\hat{Q}_i$ here.]}. 
We use the empirical distributions as the nominal distributions in the construction of the uncertainty sets. Let $$\psi_k : \mathcal{X} \to \mathbb{R};\; k \in [K]$$ denote $K$ real-valued, continuous functions defined on the sample space, where $[K]=\{1,...,K\}$. The uncertainty sets for $i \in [m]$ are defined as follows:
%\vvv{[It would be better to write $\psi_k[X]$ inside the expectation, with $X$ being defined as a generic observation above]}
\begin{equation}\label{eq:uncertainty_sets_m-ary}
    \mathcal{P}_i^\eta = \left\{P \in \mathcal{P} : \left| E_{P}[\psi_k(X)] - E_{\hat{Q}_i}[{\psi_k(X)}] \right| \leq \eta, \;\;  k \in [K] \right\}, 
\end{equation}
where $\eta$ is a pre-specified radius of the uncertainty sets, and $E_{P}[\cdot]$ denotes the expectation under the distribution $P$. Note that we can choose the same $\eta$ for all the constraints, since otherwise the moment functions $\psi_k$'s can be scaled appropriately. 
Let $\eta_\mathrm{max}$ denote the maximum radius so that the uncertainty sets do not overlap. Thus, it is assumed that $\eta \in (0, \eta_\mathrm{max})$, otherwise, the problem becomes ill-defined.
%\vvv{[It would be trivial only in the case of two hypotheses. In general, it may not be of interest or well-defined, but it is not trivial]}. 
For instance, in the case where $m = 2$, 
\[\eta < \mathop{\max}\limits_{k \in [K]} \frac{\left|E_{\hat{Q}_2}[\psi_k(X)] - E_{\hat{Q}_1}[\psi_k(X)] \right|}{2}.
\]
The $m$ hypotheses with uncertainty sets $\mathcal{P}_1^\eta, \ldots, \mathcal{P}_m^\eta$
%\vvv{[This just described the hypotheses, not the robust testing problem]}
are defined as follows:
\begin{align} \label{eq:def_robust_test}
    &H_1 : X \sim P_1, \;\;\; P_1 \in \mathcal{P}^\eta_1, \nonumber\\
    &\hspace{1cm}\vdots \hspace{2.5cm}\vdots \nonumber\\
    &H_m : X \sim P_m, \;\;\; P_m \in \mathcal{P}^\eta_m.
\end{align}

A (randomized) decision rule is defined using $\phi(x) = (\phi_1(x), \ldots,  \phi_m(x))$, a measurable function from $\mathcal{X}$ to the $m$-dimensional probability simplex $\Delta_m$, 
%as
% \begin{equation}
%     P(\mathrm{accept\; }H_i | X = x) = \phi_i(x),
% \end{equation}
with the interpretation that the test accepts hypothesis $i$ with probability $\phi_i(x)$ (see Chapter 1 of \cite{berger2013statistical}). 
%\vvv{[VVV: This notion of a randomized decision rule is more general than the one my book, where we defined  a randomized decision rule as one that chooses randomly between a finite set of deterministic decision rules. For the purposes of binary hypothesis testing, the latter definition was enough for finding LRT based solutions to the minmax and NP problems, since we only needed to randomize between two deterministic decision rules to find the solution. Note that we cannot use the proof of Theorem 1.2 in my book to conclude that randomization cannot improve Bayes risk, but it is easy to see that (6) is minimized by a deterministic rule. By the way, I looked at Sec 2.1 of [29] and did not find this definition of a randomized decision rule there.]} 
Note that $\phi_i(x) \geq 0$ and $\sum_{i=1}^m \phi_i(x) = 1$ for all $x \in \mathcal{X}$. We assume uniform costs, i.e., the cost of the test declaring $H_j$ when $H_i$ is true is $\identityf{j \neq i}$. The conditional risk $R_i(\phi)$ associated with hypothesis $i$, i.e., the probability of the test rejecting $H_i$ when $H_i$ is true, is given by 
\begin{equation}
    R_i(\phi) = \int_\mathcal{X} \left(1 - \phi_i(x)\right) dP_i(x).
\end{equation}
In the Bayesian setting with equal priors, the probability of error of the test  is given by:
\begin{flalign}\label{eq:Bayes_error}
P_E(\phi; P_1, \ldots, P_m) &\triangleq \sum_{i=1}^m \frac{1}{m} R_i(\delta) \nonumber\\
&= 1 - \frac{1}{m} \sum_{i=1}^m \int_\mathcal{X} \phi_i(x) dP_i(x).
\end{flalign}

The goal is to solve
\begin{flalign}\label{eq:minmax_test_m_ary}
\inf_{\phi}\sup_{P_i\in\mathcal{P}^\eta_i; i \in [m]} P_E(\phi; P_1, \ldots, P_m).
\end{flalign}
%\vvv{[Need to comment on the fact that randomization would not be needed to minimize (6) for fixed $P_i$'s. It only because we are interested in solving (7) that we extend the search to randomized decision rules.]}
Note that in order to minimize the Bayes probability of error in \eqref{eq:Bayes_error} for fixed $P_1, \ldots, P_m$, it is not necessary to consider randomized decision rules. This can be seen from the fact that $P_E(\phi; P_1, \ldots, P_m) \geq 1 - \frac{1}{m} \int_\mathcal{X} \max\limits_{i \in [m]} \left\{ p_i(x) \right\} d\mu(x)$, where $p_i$ are densities of distributions $P_i$ with respect to some reference measure $\mu$, and this lower bound is achieved by the deterministic test that chooses the hypothesis with maximum likelihood ratio $\frac{p_i(x)}{p_1(x)}$ and breaks ties arbitrarily. 

\begin{remark}
The methods proposed in this paper and their analyses can be easily extended to the setting with unequal priors.
\end{remark}

\section{Insights from the Binary Case}

Consider the robust binary hypothesis setting, for which $m = 2$. The minmax robust hypothesis testing problem can equivalently be written as \footnote{Only for the binary case, we index the hypotheses by $i = 0,1$ rather than $i = 1,2$ to be consistent with literature on binary hypothesis testing.}:
\begin{flalign}\label{eq:min_max_err_binary}
\inf_{\phi}\sup_{P_0\in\mathcal{P}_0, P_1\in\mathcal{P}_1} P_E(\phi; P_0, P_1),
\end{flalign}
where 
\begin{flalign}\label{eq:min_max_err}
P_E(\phi; P_0, P_1) \triangleq \frac{1}{2}E_{P_0} \big[\phi(x)\big] + \frac{1}{2}E_{P_1} \big[1-\phi(x)\big],
\end{flalign}
and a test $\phi:\mathcal{X}\rightarrow [0,1]$ accepts $H_0$ with probability $1- \phi(x)$, and accepts $H_1$ with probability $\phi(x)$.
Much of the prior work in robust hypothesis testing has been focused on the binary setting. An important result in this setting from Huber's seminal work in \cite{huber1965robust} is the characterization of sufficient conditions for a pair of least favorable distributions for the construction of a saddle point, later formalized in \cite{vvv1994minmax} as the \emph{Joint Stochastic Boundedness (JSB)} condition. This condition states that, a pair of uncertainty classes $\mathcal{P}_0$ and $\mathcal{P}_1$ is jointly stochastically bounded by $(Q_0, Q_1)$, if there exist distributions $Q_0 \in \mathcal{P}_0$ and $Q_1 \in \mathcal{P}_1$, such that for any distributions $P_0 \in \mathcal{P}_0$ and $P_1 \in \mathcal{P}_1$, and all $t \in \mathbb{R}$, 
\begin{equation}\label{eq:JSB_0}
    P_0\{\ln L_q(x) > t\} \leq  Q_0\{\ln L_q(x) > t\}, 
\end{equation}
and
\begin{equation}\label{eq:JSB_1}
    P_1\{\ln L_q(x) > t\} \geq  Q_1\{\ln L_q(x) > t\}, 
\end{equation}
where $L_q(x) = \frac{q_1(x)}{q_0(x)}$ is the likelihood ratio between $Q_1$ and $Q_0$. Intuitively, this condition leads to a pair of distributions from the uncertainty sets that are closest to each other {for the purpose of hypothesis testing.}
%, thus making it harder to differentiate between the two hypotheses. 

The advantage of {having} a pair of worst-case distributions that satisfies the JSB condition is that the optimal solution for the minmax Bayes problem can be constructed directly as the likelihood ratio test between this pair of distributions. Additionally, for testing a batch of independent {observations}, the sum of the log-likelihood ratios can be used to construct the optimal test. However, except for a few special cases of the uncertainty classes such as the epsilon-contamination classes and total variation constrained classes (see \cite{huber1965robust}), it is difficult to ascertain the existence of distributions satisfying the JSB condition. Most {recent} works in the robust hypothesis testing literature employ optimization-based approaches, and {do not even} discuss the JSB property \cite{gul2017minmax, xie2021robust, wang2022data, sun2022kernel}. 

For moment-constrained uncertainty classes, we provide some insights on the existence of distributions satisfying the JSB property. In particular, we provide a counterexample to show that the JSB property may not be satisfied in a simple finite alphabet setting, where $\mathcal{P}_0$ contains a single distribution, and $\mathcal{P}_1$ contains distributions satisfying a single first-order moment constraint.

\begin{claim}\label{claim:jsb}
    Consider the alphabet $\mathcal{X} =   \{0,1,2\}$. Let $\mathcal{P}_0$ contain a single distribution $P_0 = [0.45, 0.25, 0.3]$. Let the uncertainty set $\mathcal{P}_1$ be defined as:
\begin{equation}
    \mathcal{P}_1 = \{P \in \mathcal{P} : E_P[X] \geq 0.9\}.
\end{equation}
    This pair of uncertainty sets defined on $\mathcal{X}$ does not satisfy the JSB condition in \eqref{eq:JSB_0} and \eqref{eq:JSB_1}.
\end{claim}

\begin{proof}
    In order to prove that this pair of uncertainty sets does not satisfy the JSB condition, we need to show that there does not exist any $Q_0 \in \mathcal{P}_0$ and $Q_1 \in \mathcal{P}_1$ such that the conditions in \eqref{eq:JSB_0} and \eqref{eq:JSB_1} are satisfied. It follows from the structure of $\mathcal{P}_0$ that the only possibility for $Q_0$ is $P_0$. Among the distributions in $\mathcal{P}_1$, it suffices to check for distributions $Q_1 \in \mathcal{P}_1$ that satisfy the moment constraint with equality, since any $(Q_0, Q_1)$ that satisfies the JSB condition is also part of a saddle point solution for the minmax problem 
    \cite{moulin2018statistical}
    %\vvv{[Need to give a reference here? To Huber?]}
    , i.e., $Q_1$ needs to satisfy
\begin{equation}
    0*Q_1(0) + 1*Q_1(1) + 2*Q_1(2) = 0.9. 
\end{equation}
Since $Q_1$ also needs to be a valid distribution, $Q_1(0) = 1 - Q_1(1) - Q_1(2)$. Using this with the above condition, we have that 
\begin{equation*}
    \begin{split}
         Q_1(2) &= q \\
        Q_1(0) &= 0.1 + q \\
        Q_1(1) &= 0.9 - 2q 
    \end{split}
\end{equation*}
and 
\begin{equation*}
    \begin{split}
        L_q(0) &= \frac{0.1 + q}{0.45} \\
        L_q(1) &= \frac{0.9 - 2q}{0.25} \\
        L_q(2) &= \frac{q}{0.3}
    \end{split}
\end{equation*}
{for $q \in [0,1]$.
Thus, we have reduced the problem to showing that the conditions in \eqref{eq:JSB_0} and \eqref{eq:JSB_1} are not satisfied for all $q \in [0,1]$.} We consider the following cases:
\begin{itemize}
    \item \textbf{Case 1 -} $q \in [0,0.2]$:
    In this case, $L_q(1) \geq L_q(0) \geq L_q(2)$. Thus, the possibilities for the event $\{\ln{L_q(x)} > t\}$ %\vvv{[In (7) and (8) you have the log of $L_q$]} 
    are $\{2,0,1\} , \{1,0\}, \{1\}, \emptyset $ 
    %\vvv{[Use the notation $\emptyset$ for the empty set, rather than $\phi$, which you use for the decision rule. You also don't need braces around the empty set.]} 
    for $t \in \mathbb{R}$. Note that 
    \begin{equation*}
    \begin{split}
        &Q_1(\{2,0,1\}) = 1 \\
        &Q_1(\{1,0\}) \in [0.8,1] \\
        &Q_1(\{1\}) \in [0.5,0.9] \\
        &Q_1(\emptyset) = 0.
    \end{split}
    \end{equation*}
    Consider the distribution $P_1 = [0.05, 0.65, 0.3] \in \mathcal{P}_1$. There exists no possible $Q_1$ such that \eqref{eq:JSB_0} and \eqref{eq:JSB_1} are satisfied. Thus, $q \in [0,0.2]$ does not contain a possible solution. 

    \item \textbf{Case 2 -} $q \in [0.2,0.3176]$: In this case, $L_q(1) \geq L_q(2) \geq L_q(0)$,  and the possibilities for the event for $\{L_q(x) > t\}$ are $\{1,2,0\} , \{1,2\}, \{1\}, \emptyset $ for $t \in \mathbb{R}$. Note that 
    \begin{equation*}
    \begin{split}
        &Q_1(\{1,2,0\}) = 1 \\
        &Q_1(\{1,2\}) \in [0.5824,0.7] \\
        &Q_1(\{1\}) \in [0.2647,0.5] \\
        &Q_1(\emptyset) = 0.
    \end{split}
    \end{equation*}
    Consider the distribution $P_1 = [0.3, 0.2, 0.5] \in \mathcal{P}_1$. There exists no possible $Q_1$ such that \eqref{eq:JSB_0} and \eqref{eq:JSB_1} are satisfied. Thus, $q \in [0.2,0.3176]$ does not contain a possible solution. 

    \item \textbf{Case 3 -} $q \in [0.3176, 0.3304]$: In this case, $L_q(2) \geq L_q(1) \geq L_q(0)$,  and the possibilities for the event for $\{L_q(x) > t\}$ are $\{0,1,2\} , \{1,2\}, \{2\}, \emptyset$ for $t \in \mathbb{R}$. Note that 
    \begin{equation*}
    \begin{split}
        &Q_1(\{0,1,2\}) = 1 \\
        &Q_1(\{1,2\}) \in [0.5696, 0.5824] \\
        &Q_1(\{2\}) \in [0.3176,0.3304] \\
        &Q_1(\emptyset) = 0.
    \end{split}
    \end{equation*}
    Consider the distribution $P_1 = [0.1, 0.6, 0.3] \in \mathcal{P}_1$. There exists no possible $Q_1$ such that \eqref{eq:JSB_0} and \eqref{eq:JSB_1} are satisfied. Thus, $q \in [0.3176, 0.3304]$ does not contain a possible solution. 

    \item \textbf{Case 4 -} $q \in [0.3304, 0.45]$: In this case, $L_q(2) \geq L_q(0) \geq L_q(1)$,  and the possibilities for the event for $\{L_q(x) > t\}$ are $\{0,1,2\} , \{0,2\}, \{2\}, \emptyset $ for $t \in \mathbb{R}$. Note that 
    \begin{equation*}
    \begin{split}
        &Q_1(\{0,1,2\}) = 1 \\
        &Q_1(\{0,2\}) \in [0.7608,1] \\
        &Q_1(\{2\}) \in [0.3304, 0.45] \\
        &Q_1(\emptyset) = 0.
    \end{split}
    \end{equation*}
    Consider the distribution $P_1 = [0.1, 0.6, 0.3] \in \mathcal{P}_1$. There exists no possible $Q_1$ such that \eqref{eq:JSB_0} and \eqref{eq:JSB_1} are satisfied. Thus, $q \in [0.3304, 0.45]$ does not contain a possible solution. 

    \item \textbf{Case 5 -} $q \in [0.45, 1]$: This case is not possible, since for these values of $q$, $Q_1(1)$ does not take valid probability values.
    \end{itemize}

    Thus, we have that there exist no possible $Q_0 \in \mathcal{P}_0$ and $Q_1 \in \mathcal{P}_1$ that satisfy the JSB condition. 
    %\vvv{[This sentence does not make sense. Needs to be rewritten]}
\end{proof}

Through the construction of a counterexample, we have seen that the JSB {condition may} not hold for moment-constrained uncertainty classes in general. This motivates using an optimization based approach in this paper. %\vvv{[Too many "thus"es]}

\section{Robust Multi-Hypothesis Testing}

First, we present a result to establish the existence of a saddle point for our minmax problem. Then, we characterize the minmax optimal test in the case where the alphabet is finite. In the infinite-alphabet case, {for which} the optimization problem is infinite dimensional, we present a tractable finite-dimensional approximation, and bound %\vvv{[VVV: bound?]} 
the error due to the approximation. Henceforth, we denote $E[\psi_k(X)]$ by $E[\psi_k]$ for ease of notation.

\begin{theorem}\label{theorem:sion}
%The minmax formulation in~\eqref{eq:minmax_test_m_ary} is equivalent to \vvv{[VVV: \eqref{eq:minmax_test_m_ary} is the LHS of the equation below. Better to say that "the minmax problem in \eqref{eq:minmax_test_m_ary} has a saddle-point solution"?]}:
The minmax problem in \eqref{eq:minmax_test_m_ary} has a saddle-point solution $(\phi^*; P_1^*, \ldots, P_m^*)$, i.e., 
\begin{equation}\label{eq:minmax_maxmin}
    \inf_{\phi}\sup_{P_i\in\mathcal{P}^\eta_i; i \in [m]} P_E(\phi; P_1, \ldots, P_m) = \sup_{P_i\in\mathcal{P}^\eta_i; i \in [m]} \inf_{\phi} P_E(\phi; P_1, \ldots, P_m).
\end{equation}
\end{theorem}

\begin{proof}
The uncertainty sets $\mathcal{P}^\eta_i$ are convex. Indeed, for some $\lambda \in (0,1)$, $P, Q \in \mathcal{P}^\eta_i$ for $i \in [m]$, and $k \in [K]$, we have that 
\begin{align}
    &\left| E_{\lambda P + (1- \lambda) Q} \left[ \psi_k \right] - E_{\hat{Q}_i}\left[ \psi_k \right] \right| \nonumber\\ 
    &=  \left| \lambda E_{ P} \left[ \psi_k \right]  + (1- \lambda) E_{ Q} \left[ \psi_k \right] - E_{\hat{Q}_i}\left[ \psi_k \right] \right| \nonumber\\
    &\leq \lambda \left| E_{P} \left[ \psi_k \right] - E_{\hat{Q}_i}\left[ \psi_k \right] \right| + (1-\lambda) \left| E_{Q} \left[ \psi_k \right] - E_{\hat{Q}_i}\left[ \psi_k \right] \right| \nonumber\\
    &\leq \eta.
\end{align}
Thus, $\lambda P + (1- \lambda) Q \in \mathcal{P}^\eta_i$, and we have that  $\mathcal{P}^\eta_i$ are convex for $i \in [m]$.
Note that it follows that $\mathcal{P}^\eta_1 \times \ldots \times \mathcal{P}^\eta_m$ is also convex. Since $\mathcal{X}$ is compact, we have that the set of all decision rules is compact and convex. Finally, the error probability $P_E(\phi; P_1, \ldots, P_m)$ is continuous, real-valued and linear in $\phi, P_1, \ldots, P_m$. Thus, applying Sion's minmax theorem \cite{sion1958minmax} concludes the proof.
\end{proof}

%Thus, there exists a saddle point solution $(\phi^*; P_1^*, \ldots, P_m^*)$ for the minmax problem. \vvv{[VVV: Maybe this should be part of the theorem statement?]}

\subsection{Finite Alphabet: Optimal Test}

Consider a finite alphabet $\mathcal{X} = \{ z_1, \ldots, z_N\}$. Let $P_{i,N}$ denote a probability mass function on $\mathcal{X}$. The randomized decision rule for a finite alphabet $\mathcal{X}$ can be written as $\phi_{N} = [\phi_{1,N}, \ldots, \phi_{m,N}]^T$, where $\phi_{i,N} = [\phi_{i,N}(z_1), \ldots, \phi_{i,N}(z_N)]^T$ and $\sum_{i=1}^m \phi_{i,N}(z_j) = 1$ for $j \in [N]$. Then the minmax problem in \eqref{eq:minmax_test_m_ary} can be written as:
\begin{equation}\label{eq:minmax_finite_m_ary}
\begin{split}
         \min_{\phi_{i,N} \in [0,1]^{N}, i \in [m]} &\max_{P_{i,N} \in [0,1]^N, i \in [m]}  1 - \frac{1}{m} \sum_{i=1}^m \phi_{i,N}^T P_{i,N}  \\ 
        \text{s.t. }\;\; & \left| \sum_{j=1}^N p_{i,N}(z_j) \psi_k (z_j) - E_{\hat{Q}_i}[{\psi_k}] \right| \leq \eta,\;\; k \in [K]\\
        &  P_{i,N}^T \mathbf{1}= 1, \;\; i \in [m] \\
        & \sum_{i=1}^m \phi_{i,N}(z_j) = 1, \;\; j \in [N], 
        % & 0 \leq p_i(z_j) \leq 1, \;\;\; j = 1, \ldots, N , \;\;\; i =0,1 \\
        % & 0 \leq \phi(z_j) \leq 1, \;\;\; j = 1, \ldots, N , \;\;\; i =0,1,
        \end{split}
\end{equation}
where $P_{i,N} = [p_{i,N}(z_1), \ldots, p_{i,N}(z_N)]^T$ for $i \in [m]$ and $\mathbf{1}$ is the vector of all ones.     
Using the dual formulation for the inner maximization, we can reformulate the minmax problem into a tractable minimization problem. 

\begin{theorem}\label{thm:m_ary_finite_dual}
    The minmax optimization problem in \eqref{eq:minmax_finite_m_ary} can be reformulated as:
    \begin{equation}\label{eq:minmax_finite_dual_m_ary}
    \begin{split}
            &\min_{\substack{\phi_{i,N} \in [0,1]^{N}, i \in [m], \\ \lambda^\ell_{i,k},\lambda^u_{i,k} \geq 0, k \in [K], i\in [m] \\ \mu_1, \ldots, \mu_m \in \mathbb{R}}} 1 - \sum_{i=1}^m\sum_{k=1}^K \lambda_{i,k}^\ell a_{0,k}  + \sum_{i=1}^m\sum_{k=1}^K \lambda_{i,k}^u b_{0,k}  - \sum_{i=1}^m \mu_i, \\
            \text{s.t. }\;\; & \frac{1}{m}\phi_{i,N}(z_j) - \sum_{k=1}^K \lambda^\ell_{i,k} \psi_{k}(z_j) + \sum_{k=1}^K \lambda^u_{i,k}\psi_{k}(z_j) - \mu_i \geq 0,  \;\;\; j \in [N], 
        \;\;\; i \in [m] \\
            & \sum_{i=1}^m \phi_{i,N}(z_j) = 1, 
             \;\;\; j \in [N],
             \end{split}
    \end{equation}
    where $a_{i,k} = E_{\hat{Q}_i}[{\psi_k}] - \eta$, $b_{i,k} = E_{\hat{Q}_i}[{\psi_k}] + \eta$.
\end{theorem}

\begin{proof}
    For some fixed $\phi_N$, consider the inner maximization problem in \eqref{eq:minmax_finite_m_ary}, which can be equivalently written as
    \begin{equation}\label{eq:max_finite_m_ary}
    \begin{split}
         &\min_{P_{i,N} \in [0,\infty)^N, i \in [m]}  - 1 + \frac{1}{m} \sum_{i=1}^m \phi_{i,N}^T P_{i,N}  \\ 
        \text{s.t. }\;\; & \left| \sum_{j=1}^N p_{i,N}(z_j) \psi_k (z_j) - E_{\hat{Q}_i}[{\psi_k}] \right| \leq \eta,\;\; k \in [K] \\
        & P_{i,N}^T \mathbf{1} = 1, \;\; i \in [m].  
        % & 0 \leq p_i(z_j) \leq 1, \;\;\; j = 1, \ldots, N , \;\;\; i =0,1 \\
        % & 0 \leq \phi(z_j) \leq 1, \;\;\; j = 1, \ldots, N , \;\;\; i =0,1,
        \end{split}
\end{equation}
    We can write the Lagrangian of the above problem as
    \begin{align}\label{eq:finite_lagrangian}
      L(P_{1,N}, \ldots, P_{m,N}; \mathbf{\lambda}_1, \ldots, \mathbf{\lambda}_m, \mathbf{\mu}) &= -1 +  \frac{1}{m} \sum_{i=1}^m \phi_{i,N}^T P_{i,N}
      + \sum_{i=1}^m \sum_{k=1}^K \lambda_{i,k}^\ell (a_{i,k} - \mathbf{\psi_k}^T P_{i,N}) \nonumber\\
      &+ \sum_{i=1}^m \sum_{k=1}^K \lambda_{i,k}^u (\mathbf{\psi_k}^T P_{i,N} - b_{i,k}) 
      + \sum_{i=1}^m \mu_i(1 - \mathbf{1}^T P_{i,N}),
\end{align}
or equivalently, 
\begin{align}\label{eq:finite_lagrangian_2}
    L(P_{1,N}, \ldots, P_{m,N}; \mathbf{\lambda}_1, \ldots, \mathbf{\lambda}_m, \mathbf{\mu})
     &=  \sum_{i=1}^m P_{i,N}^T \left( \frac{1}{m} \phi_{i,N} - \sum_{k=1}^K \lambda_{i,k}^\ell \mathbf{\psi_k} + \sum_{k=1}^K \lambda_{i,k}^u \mathbf{\psi_k} -\mu_i \mathbf{1} \right) \nonumber\\
     & -1 + \sum_{i=1}^m \sum_{k=1}^K \lambda_{i,k}^\ell a_{i,k} - \sum_{i=1}^m  \sum_{k=1}^K \lambda_{i,k}^\ell b_{i,k} 
     +  \sum_{i=1}^m \mu_i,
\end{align}
where $\mathbf{\lambda}_i = (\lambda^\ell_{i,1}, \ldots, \lambda^\ell_{i,K}, \lambda^u_{i,1}, \ldots, \lambda^u_{i,K}) $ for $i \in [m]$ and $\mathbf{\mu} = (\mu_1, \ldots, \mu_m)$ are the dual variables. 

It is clear to see that the dual function is 
\begin{align}\label{eq:dual_function_finite}
        &\min_{P_{i,N} \in [0,\infty)^N, i \in [m]} L(P_{1,N}, \ldots, P_{m,N}; \mathbf{\lambda}_1, \ldots, \mathbf{\lambda}_m, \mathbf{\mu}) \nonumber\\
       & = - 1 + \sum_{i=1}^m\sum_{k=1}^K \lambda_{i,k}^\ell a_{0,k}  - \sum_{i=1}^m\sum_{k=1}^K \lambda_{i,k}^u b_{0,k}  + \sum_{i=1}^m \mu_i,
\end{align}
with 
\begin{align}
        &\frac{1}{m}\phi_{i,N}(z_j) - \sum_{k=1}^K \lambda^\ell_{i,k} \psi_{k}(z_j) + \sum_{k=1}^K \lambda^u_{i,k}\psi_{k}(z_j) - \mu_i \geq 0, \;\;\; j \in [N], 
        \;\;\; i \in [m], \\
            & \sum_{i=1}^m \phi_{i,N}(z_j) = 1, \;\;\; j \in [N].    
\end{align}
Since the objective function of the inner maximization problem is linear in $P_{1,N}, \ldots, P_{m,N}$, and the feasible set is convex, strong duality holds by Prop 6.2.1 in \cite{bertsekas2016nonlinear}. Thus, using the dual function for the inner maximization problem as in \eqref{eq:dual_function_finite}, we can write the dual problem of the original minmax problem as in \eqref{eq:minmax_finite_dual_m_ary}. 
\end{proof}

By solving the optimization problem in \eqref{eq:minmax_finite_dual_m_ary}, we get the optimal minmax decision rule $\phi_N^*$ for all points in the finite alphabet $\mathcal{X}$. Let $\gamma_N$ denote the optimal value of \eqref{eq:minmax_finite_dual_m_ary}. Note that for any test $\phi_N$, the saddle point solution $(\phi_N^*, P_{1,N}^*, \ldots,  P_{m,N}^*)$ satisfies
\begin{equation}
    P_E(\phi_N^*, P_{1,N}^*, \ldots,  P_{m,N}^*) \leq P_E(\phi_N, P_{1,N}^*, \ldots,  P_{m,N}^*), 
\end{equation}
i.e., 
\begin{equation}\label{eq:finite_saddle_property}
    1 - \frac{1}{m} \sum_{i=1}^m \sum_{j=1}^N {\phi_{i,N}^*}(z_j) p^*_{i,N}(z_j) \leq 
    1 - \frac{1}{m} \sum_{i=1}^m \sum_{j=1}^N \phi_{i,N}(z_j) p^*_{i,N}(z_j).
\end{equation}
The test $\phi_N^*$ has to be in the form of a likelihood ratio test constructed using $P_{1,N}^*, \ldots,  P_{m,N}^*$ in case of no tie breaks. Indeed, for any $z_j$ with $\mathop{\arg\max}\limits_{i 
\in [m]} \frac{p_{i,N}^*(z_j)}{p_{1,N}^*(z_j)} = 
\{i^*\}$, if the optimal test is such that $\phi_{i^*,N}^*(z_j) \neq 1$, it is clear to see from \eqref{eq:finite_saddle_property} that another test with $\phi_{i^*,N}(z_j) = 1$ will have a lower probability of error, resulting in a contradiction. 

We can obtain the corresponding saddle point worst-case distributions $(P_{1,N}^*, \ldots,  P_{m,N}^*)$ by substituting $\phi_N^*$ in the original minmax objective \eqref{eq:minmax_finite_m_ary} with an additional constraint to satisfy the saddle point property $\inf_{\phi_N} P_E(\phi_N, P_{1,N}^*, \ldots,  P_{m,N}^*) = \gamma_N$, and solving the corresponding maximization problem:
\begin{equation}
    \begin{split}
     &\max_{P_{i,N} \in [0,1]^N, i \in [m]}  1 - \frac{1}{m} \sum_{i=1}^m {\phi^*_{i,N}}^T P_{i,N}  \\ 
        \text{s.t. }\;\; & \left| \sum_{j=1}^N p_{i,N}(z_j) \psi_k (z_j) - E_{\hat{Q}_i}[{\psi_k}] \right| \leq \eta,\;\; k \in [K]\\
        & P_{i,N}^T \mathbf{1} = 1, \;\; i \in [m] \\
        &  1 - \frac{1}{m} \sum_{j=1}^N  \max_{i \in [m]} \{p_{i,N}(z_j)\} = \gamma_N.  
        % & 0 \leq p_i(z_j) \leq 1, \;\;\; j = 1, \ldots, N , \;\;\; i =0,1 \\
        % & 0 \leq \phi(z_j) \leq 1, \;\;\; j = 1, \ldots, N , \;\;\; i =0,1,
    \end{split}
\end{equation}

\subsection{Infinite Alphabet}

In this section, we consider the case when $\mathcal{X}$ is infinite.  In this case, the minmax formulation in \eqref{eq:minmax_test_m_ary} is in general an infinite-dimensional optimization problem, and closed form solutions are difficult to derive.
Indeed, the minmax problem is
\begin{equation}\label{eq:minmax_infinite_m_ary}
    \begin{split}
         \inf_{\phi} &\sup_{P_i \in \mathcal{M}, i \in [m]}  1 - \frac{1}{m} \sum_{i=1}^m \int_\mathcal{X} \phi_i(x) dP_i(x)  \\ 
        \text{s.t. }\;\; & \left| \int_\mathcal{X}  \psi_k (x) dP_i(x) - E_{\hat{Q}_i}[{\psi_k}] \right| \leq \eta,\;\; k \in [K], \;\; i \in [m] \\
        & \int_\mathcal{X} dP_i(x) = 1, \;\; i \in [m].  
        % & 0 \leq p_i(z_j) \leq 1, \;\;\; j = 1, \ldots, N , \;\;\; i =0,1 \\
        % & 0 \leq \phi(z_j) \leq 1, \;\;\; j = 1, \ldots, N , \;\;\; i =0,1,
    \end{split}
\end{equation}
%where $p_i(x)$ are the probability density functions corresponding to $P_i$, respectively, and 
where $\mathcal{M}$ is the set of all positive signed measures. We can characterize the dual for this problem with a form similar to the optimization problem in Theorem \ref{thm:m_ary_finite_dual}.

\begin{theorem}\label{thm:m_ary_infinite_dual}
    The strong dual for the minmax robust problem in the infinite alphabet case is 
    \begin{equation}\label{eq:minmax_infinite_dual_m_ary}
        \begin{split}
            &\inf_{\substack{\phi, \\ \lambda^\ell_{i,k},\lambda^u_{i,k} \geq 0, k \in [K], i\in [m] \\ \mu_1, \ldots, \mu_m \in \mathbb{R}}} 1 - \sum_{i=1}^m\sum_{k=1}^K \lambda_{i,k}^\ell a_{0,k} + \sum_{i=1}^m\sum_{k=1}^K \lambda_{i,k}^u b_{0,k}  - \sum_{i=1}^m \mu_i, \\
            \text{s.t. }\;\; & \frac{1}{m}\phi_i(x) - \sum_{k=1}^K \lambda^\ell_{i,k} \psi_{k}(x) + \sum_{k=1}^K \lambda^u_{i,k}\psi_{k}(x) - \mu_i \geq 0, \;\; \forall x \in \mathcal{X}, \;\; i \in [m],   \\
            & \sum_{i=1}^m \phi_i(x) = 1,  \;\; \forall x \in \mathcal{X},
        \end{split}
    \end{equation}
    where $a_{i,k} = E_{\hat{Q}_i}[{\psi_k}] - \eta$, $b_{i,k} = E_{\hat{Q}_i}[{\psi_k}] + \eta$.
\end{theorem}

\begin{proof}
     Similar to Theorem \ref{thm:m_ary_finite_dual}, we construct the Lagrangian for the inner maximization (or equivalently minimization of the negative objective) in \eqref{eq:minmax_infinite_m_ary} as follows:
     \begin{align}\label{eq:infinite_lagrangian}
      &L(P_1, \ldots, P_m; \mathbf{\lambda}_1, \ldots, \mathbf{\lambda}_m, \mathbf{\mu}) \nonumber\\
      &= -1 +  \frac{1}{m} \sum_{i=1}^m \int_\mathcal{X} \phi_i(x) dP_i(x)
      + \sum_{i=1}^m \sum_{k=1}^K \lambda_{i,k}^\ell \left( a_{i,k} - \int_\mathcal{X} {\psi_k}(x) dP_i(x) \right)\nonumber\\
     &\quad
     + \sum_{i=1}^m \sum_{k=1}^K \lambda_{i,k}^u \left( \int_\mathcal{X} {\psi_k(x)}^T dP_i(x) - b_{i,k} \right)  + \sum_{i=1}^m \mu_i \left( 1 - \int_\mathcal{X} dP_i(x) \right),
\end{align}
or equivalently, 
\begin{align}\label{eq:infinite_lagrangian_2}
    L(P_1, \ldots, P_m; \mathbf{\lambda}_1, \ldots, \mathbf{\lambda}_m, \mathbf{\mu})
     &=  \sum_{i=1}^m \int_\mathcal{X} \left( \frac{1}{m} \phi_i(x) - \sum_{k=1}^K \lambda_{i,k}^\ell {\psi_k(x)} + \sum_{k=1}^K \lambda_{i,k}^u {\psi_k(x)} -\mu_i  \right) dP_i(x) \nonumber\\
     &\quad -1 + \sum_{i=1}^m \sum_{k=1}^K \lambda_{i,k}^\ell a_{i,k} - \sum_{i=1}^m  \sum_{k=1}^K \lambda_{i,k}^\ell b_{i,k} 
     +  \sum_{i=1}^m \mu_i,
\end{align}
where $\mathbf{\lambda}_i = (\lambda^\ell_{i,1}, \ldots, \lambda^\ell_{i,K}, \lambda^u_{i,1}, \ldots, \lambda^u_{i,K}) $ for $i \in [m]$ and $\mathbf{\mu} = (\mu_1, \ldots, \mu_m)$ are the dual variables. 
Thus, the dual function is 
\begin{equation}\label{eq:dual_function_infinite}
    \begin{split}
        \inf_{P_i \in \mathcal{M}} L(P_1, \ldots, P_m; \mathbf{\lambda_1}, \ldots, \mathbf{\lambda_m}, \mathbf{\mu}) 
        &= - 1 + \sum_{i=1}^m\sum_{k=1}^K \lambda_{i,k}^\ell a_{0,k}  - \sum_{i=1}^m\sum_{k=1}^K \lambda_{i,k}^u b_{0,k}  + \sum_{i=1}^m \mu_i,
    \end{split}
\end{equation}
with 
\begin{align}
        &\frac{1}{m}\phi_{i}(x) - \sum_{k=1}^K \lambda^\ell_{i,k} \psi_{k}(x) + \sum_{k=1}^K \lambda^u_{i,k}\psi_{k}(x) - \mu_i \geq 0, \;\; \forall x \in \mathcal{X}, 
        \;\; i \in [m].  \nonumber\\
            & \sum_{i=1}^m \phi_{i}(x) = 1, \;\;
            \forall x \in \mathcal{X}.\nonumber
\end{align}
We can establish strong duality using the Shapiro duality conditions for conic linear programs (Prop 3.4 in \cite{shapiro2001duality}). Indeed, the Slater type condition is satisfied using the empirical distribution in the uncertainty sets. Thus, using the strong dual for the inner maximization problem, we obtain the reformulation in Theorem \ref{thm:m_ary_infinite_dual}.
\end{proof}

The dual formulation in \eqref{eq:minmax_infinite_dual_m_ary} is an infinite dimensional optimization problem (in the variable $\phi$) with infinitely many constraints. We propose a tractable finite dimensional optimization problem as an approximation to the minmax problem, and construct a robust detection test based on the solution to the approximation. In addition, we bound the error arising from the approximation of the original problem formulation. 

Recall that $\mathcal{X}$ is a compact set in $\mathbb{R}^d$. For the sake of simplicity %\vvv{and without loss of generality}, 
and without loss of generality, we assume that $\mathcal{X} \subseteq [0,1]^d$. First, note that the moment defining functions $\psi_k, \; k \in [K]$ are continuous functions on a compact set. Thus, they are Lipschitz functions 
with constants $L_1, \ldots, L_K$, and without loss of generality, we set $L = \max\limits_{k} L_k = 1$, since the moment functions can be scaled appropriately. In addition, we consider values of $\eta \in [\Delta,\eta_0]$, where $\eta_0 < \eta_\mathrm{max}$, 
and $0 < \Delta < \eta_0$. Let $\epsilon > 0$ such that $\eta + \epsilon \leq \eta_0$. Consider a discretization of the space $\mathcal{X}$ through an $\epsilon$-net or a covering set. Indeed, we can consider a simple and efficient construction by considering a grid of equally spaced $N = \lceil \frac{1}{\epsilon^d} \rceil$ points $\mathcal{S}_N = \{z_1, \ldots, z_N \}$ such that for any $x \in \mathcal{X}$, 

\begin{equation}
    \min_{j \in [N]} \|z_j - x\| \leq \epsilon.
\end{equation}

Here, $N$ depends on $\epsilon$, but we drop the dependence on $\epsilon$ in the notation for $N$ for readability. Let $\mathcal{P}_N$ denote all the distributions that are supported on the set $\mathcal{S}_N$. Define the relaxed uncertainty sets as follows for $i \in [m]$:
\begin{equation}\label{eq:relaxed_sets}
\begin{split}
    &\mathcal{P}^{\eta + \epsilon}_{i,N} =  \left\{ P \in \mathcal{P}_N:  \left| E_{P}[\psi_k] - E_{\hat{Q}_i}[{\psi_k}] \right| \leq \eta + \epsilon, \;\;  k \in [K] \right\}. 
\end{split}
\end{equation}

Consider the relaxation of the minmax problem in \eqref{eq:minmax_infinite_m_ary} with the uncertainty sets $\mathcal{P}_{i}^{\eta}$ replaced by $\mathcal{P}_{i,N}^{\eta + \epsilon}$ as follows:
\begin{equation}\label{eq:minmax_infinite_m_ary_relaxation}
    \begin{split}
         \inf_{\phi_{i,N} \in [0,1]^{N}, i \in [m] } &\sup_{P_{i,N} \in [0,1]^N, i \in [m]} 1 - \frac{1}{m} \sum_{i=1}^m \phi_{i,N}^T P_{i,N}   \\ 
        \text{s.t.} \;\; & \left| \sum_{j=1}^N p_{i,N}(z_j) \psi_k (z_j) - E_{\hat{Q}_i}[{\psi_k}] \right| \leq \eta + \epsilon,\;\; k \in [K],\\
        & P_{i,N}^T \mathbf{1} = 1, \;\;\; i \in [m], \\
        & \sum_{i=1}^m \phi_{i,N}(z_j) = 1, 
             \;\;\; j \in [N].
        % & 0 \leq p_i(z_j) \leq 1, \;\;\; j = 1, \ldots, N , \;\;\; i =0,1 \\
        % & 0 \leq \phi(z_j) \leq 1, \;\;\; j = 1, \ldots, N , \;\;\; i =0,1,
    \end{split}
\end{equation}

We can construct a tractable dual to the above problem similar to the finite alphabet case as follows:
\begin{equation}\label{eq:minmax_infinite_relaxation_dual_m_ary}
        \begin{split}
            &\inf_{\substack{\phi_{i,N} \in [0,1]^{N}, i \in [m], \\ \lambda^\ell_{i,k},\lambda^u_{i,k} \geq 0, k \in [K], i\in [m] \\ \mu_1, \ldots, \mu_m \in \mathbb{R}}} 1 - \sum_{i=1}^m\sum_{k=1}^K \lambda_{i,k}^\ell (a_{0,k} - \epsilon)   + \sum_{i=1}^m\sum_{k=1}^K \lambda_{i,k}^u (b_{0,k} + \epsilon) - \sum_{i=1}^m \mu_i, \\
            \text{s.t.} \;\; & \frac{1}{m}\phi_{i,N}(z_j) - \sum_{k=1}^K \lambda^\ell_{i,k} \psi_{k}(z_j) + \sum_{k=1}^K \lambda^u_{i,k}\psi_{k}(z_j) - \mu_i \geq 0, \;\; j \in [N],  \;\; i \in [m], \\
            & \sum_{i=1}^m \phi_{i,N}(z_j) = 1, \;\; j \in [N],
        \end{split}
    \end{equation}
    where $a_{i,k} = E_{\hat{Q}_i}[{\psi_k}] - \eta$, $b_{i,k} = E_{\hat{Q}_i}[{\psi_k}] + \eta$.
    This is a finite dimensional optimization problem and can be solved efficiently. With the optimal solution to the above problem $\phi_N^*$, we can obtain the saddle point distributions $(P_{1,N}^*, \ldots, P_{m,N}^*)$ as in the finite alphabet case.

    Let the optimal values of the minmax problem in \eqref{eq:minmax_infinite_m_ary} and the approximation problem in \eqref{eq:minmax_infinite_m_ary_relaxation} be denoted by $\gamma$ and $\gamma_{N,\epsilon}$, respectively. We have the following result showing the convergence of $\gamma_{N,\epsilon}$ to $\gamma$, and bounding the error due to the approximation.

    \begin{theorem}\label{thm:convergence}
    With the optimal values of the minmax Bayes formulation and its approximation denoted as $\gamma$ and $\gamma_{N, \epsilon}$, respectively, as $\epsilon \to 0$ (equivalently $N \to \infty$), $\gamma_{N, \epsilon}$ converges to $\gamma$, with $|\gamma_{N, \epsilon} - \gamma| \leq L_0 \epsilon$.
\end{theorem}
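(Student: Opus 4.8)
The plan is to prove the claim as a two-sided sandwich, $\gamma \le \gamma_\epsilon \le \gamma + \tfrac{1}{2}L_0\epsilon$, which immediately gives $|\gamma_\epsilon - \gamma| \le L_0\epsilon$ and hence convergence as $\epsilon \to 0$. Both inequalities rest on the identity $\int_{\mathcal{X}}\min\{p_0,p_1\}\,dx = 1 - \mathrm{TV}(P_0,P_1)$ already used in \eqref{eq:min_err}, together with the $1$-Lipschitz property of each $\psi_k$ and the Lipschitz property of $g$ supplied by Lemma~\ref{lem:continuity}.

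For the lower bound $\gamma_\epsilon \ge \gamma$, I would discretize an arbitrary feasible pair for the original problem via the nearest-neighbor quantizer $T:\mathcal{X}\to\mathcal{S}_N$, $T(x)=\arg\min_i\|z_i-x\|$ (ties broken arbitrarily), so that $\|T(x)-x\|\le\epsilon$ for all $x$. Given $P_0\in\mathcal{P}^\eta_0$ and $P_1\in\mathcal{P}^\eta_1$, set $\tilde P_i = T_{\#}P_i$, the pushforward measures supported on $\mathcal{S}_N$. Since $\psi_k$ is $1$-Lipschitz, $|E_{\tilde P_i}[\psi_k]-E_{P_i}[\psi_k]| = |\int(\psi_k(T(x))-\psi_k(x))\,dP_i|\le\epsilon$, and combined with feasibility of $P_i$ this yields $|E_{\tilde P_i}[\psi_k]-E_{\hat Q_i}[\psi_k]|\le\eta+\epsilon$, so $\tilde P_i\in\mathcal{P}^{\eta+\epsilon}_{i,N}$. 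By the data-processing inequality for total variation, $\mathrm{TV}(\tilde P_0,\tilde P_1)=\mathrm{TV}(T_{\#}P_0,T_{\#}P_1)\le\mathrm{TV}(P_0,P_1)$, hence $\sum_i\min\{\tilde p_0(z_i),\tilde p_1(z_i)\} = 1-\mathrm{TV}(\tilde P_0,\tilde P_1)\ge 1-\mathrm{TV}(P_0,P_1)=\int_{\mathcal{X}}\min\{p_0,p_1\}\,dx$. Taking the supremum over the original feasible set and using that the $\tilde P_i$ are feasible for \eqref{eq:relaxation_opt} gives $\gamma_\epsilon\ge\gamma$.

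For the upper bound I would compare the discrete relaxation against the continuous problem at the enlarged radius. Any pair feasible for \eqref{eq:relaxation_opt}, viewed as atomic measures on $\mathcal{X}$, lies in $\mathcal{P}^{\eta+\epsilon}_0\times\mathcal{P}^{\eta+\epsilon}_1$, and its discrete objective equals $1-\mathrm{TV}$ of those measures; therefore $\gamma_\epsilon\le\tfrac{1}{2}\sup_{P_0\in\mathcal{P}^{\eta+\epsilon}_0,P_1\in\mathcal{P}^{\eta+\epsilon}_1}(1-\mathrm{TV}(P_0,P_1))=\tfrac{1}{2}g(\eta+\epsilon)$. Because $\eta+\epsilon\le\eta_0$, both $\eta$ and $\eta+\epsilon$ lie in $[\Delta,\eta_0]$, on which $g$ is $L_0$-Lipschitz by Lemma~\ref{lem:continuity}; thus $g(\eta+\epsilon)-g(\eta)\le L_0\epsilon$ and $\gamma_\epsilon\le\tfrac{1}{2}g(\eta)+\tfrac{1}{2}L_0\epsilon=\gamma+\tfrac{1}{2}L_0\epsilon$. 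Combining the two bounds yields $0\le\gamma_\epsilon-\gamma\le\tfrac{1}{2}L_0\epsilon\le L_0\epsilon$, as stated (the factor $\tfrac{1}{2}$ is absorbed into the looser bound in the theorem).

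The step I expect to require the most care is the lower bound, specifically the clean invocation of the data-processing inequality for total variation, which guarantees that quantization does not increase $\mathrm{TV}$ and hence does not destroy overlap. One must also confirm that the atomic pushforward measures are legitimately comparable to the densities in the original formulation; this is handled by observing that both the overlap integral and the discrete sum equal $1-\mathrm{TV}$, a quantity defined independently of any choice of reference measure. The remaining ingredients—the Lipschitz estimates for $\psi_k$ and for $g$—are routine given Lemma~\ref{lem:continuity}.
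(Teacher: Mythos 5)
Your proof is correct and follows essentially the same route as the paper: discretize through the $\epsilon$-net (your nearest-neighbor pushforward assigns exactly the mass $P_i(\mathcal{A}_j)$ to $z_j$, as in the paper), use the $1$-Lipschitz property of the $\psi_k$ to place the quantized measures in $\mathcal{P}^{\eta+\epsilon}_{i,N}$, sandwich $\gamma_\epsilon$ between $\tfrac{1}{2}g(\eta)$ and $\tfrac{1}{2}g(\eta+\epsilon)$, and invoke the Lipschitz property of $g$ from Lemma~\ref{lem:continuity}. The only cosmetic differences are that you quantize an arbitrary feasible pair (avoiding the paper's argument that the supremum is attained) and justify the non-decrease of the overlap via the data-processing inequality for total variation rather than the paper's Jensen-type inequality --- these are the same fact.
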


    \begin{proof}
        Define the function $\Gamma_\eta (\phi, \mathbf{\lambda}_1, \ldots, \mathbf{\lambda}_m, \mathbf{\mu})$ and $\Gamma_{N}^{\eta + \epsilon}(\phi_N, \mathbf{\lambda}_1, \ldots, \mathbf{\lambda}_m, \mathbf{\mu})$ as the objective functions of \eqref{eq:minmax_infinite_dual_m_ary} and \eqref{eq:minmax_infinite_relaxation_dual_m_ary} respectively as:
        \begin{align}\label{eq:define_Gammas}
        \Gamma^\eta(\phi, \mathbf{\lambda}_1, \ldots, \mathbf{\lambda}_m, \mathbf{\mu}) &=  1 - \sum_{i=1}^m\sum_{k=1}^K \lambda_{i,k}^\ell a_{0,k} + \sum_{i=1}^m\sum_{k=1}^K \lambda_{i,k}^u b_{0,k}  - \sum_{i=1}^m \mu_i, \\
        \Gamma_N^{ \eta + \epsilon}(\phi_N, \mathbf{\lambda}_1, \ldots, \mathbf{\lambda}_m, \mathbf{\mu})&= 1 - \sum_{i=1}^m\sum_{k=1}^K \lambda_{i,k}^\ell (a_{0,k} - \epsilon)   + \sum_{i=1}^m\sum_{k=1}^K \lambda_{i,k}^u (b_{0,k} + \epsilon) - \sum_{i=1}^m \mu_i.      
    \end{align}
    Consider a solution $(\phi_N^*, \mathbf{\lambda}_1^*, \ldots, \mathbf{\lambda}_m^*, \mathbf{\mu}^*)$ of the dual of the approximation to the minmax problem \eqref{eq:minmax_infinite_relaxation_dual_m_ary}, i.e., 
     \begin{align}
         \gamma_{N,\epsilon} &= \Gamma_N^{ \eta + \epsilon}(\phi_N^*, \mathbf{\lambda}_1^*, \ldots, \mathbf{\lambda}_m^*, \mathbf{\mu}^*) \nonumber\\
         &= 1 - \sum_{i=1}^m\sum_{k=1}^K \lambda_{i,k}^{*\ell} (a_{0,k} - \epsilon)   + \sum_{i=1}^m\sum_{k=1}^K \lambda_{i,k}^{*u} (b_{0,k} + \epsilon) - \sum_{i=1}^m \mu^*_i  
     \end{align}
     and 
     \begin{align}\label{eq:approximation_constraints}
         & \frac{1}{m}\phi_{i,N}^*(z_j) - \sum_{k=1}^K \lambda^{*\ell}_{i,k} \psi_{k}(z_j) + \sum_{k=1}^K \lambda^{*u}_{i,k}\psi_{k}(z_j) - \mu^*_i \geq 0, \;\; j \in [N], \;\; i \in [m],   \nonumber\\
            & \sum_{i=1}^m \phi^*_{i,N}(z_j) = 1, \;\; j \in [N].
     \end{align}
     
     Now define a $\phi^* : \mathcal{X} \to \Delta^m$  as $\phi^*(x) = \phi_N^*(z_x)$ for $z_x \in \mathop{\arg\min}\limits_{z_j \in \mathcal{S}_N}\|x - z_j\|$. We observe that for any $x \in \mathcal{X}$, with $z_x \in \mathop{\arg\min}\limits_{z_j \in \mathcal{S}_N}\|x - z_j\|$,
    \begin{align}
        &\frac{1}{m}\phi_i^*(x) - \sum_{k=1}^K \lambda^{*\ell}_{i,k} \psi_{k}(x) + \sum_{k=1}^K \lambda^{*u}_{i,k}\psi_{k}(x) - \mu^*_i + \epsilon \mathbf{1}^T \mathbf{\lambda_i^*}\nonumber \\
        &\geq \frac{1}{m} \phi_{i,N}^*(z_x)  - \sum_{k=1}^K \lambda^{*\ell}_{i,k} (\psi_{k}(z_x) + \epsilon) +  \sum_{k=1}^K \lambda^{*u}_{i,k}(\psi_{k}(z_x) - \epsilon) - \mu^*_i + \epsilon \sum_{k=1}^K (\lambda^{*\ell}_{i,k} + \lambda^{*u}_{i,k}) \nonumber \\ 
        &=  \frac{1}{m} \phi_{i,N}^*(z_x) - \sum_{k=1}^K \lambda^{*\ell}_{i,k} \psi_{k}(z_x) +  \sum_{k=1}^K \lambda^{*u}_{i,k} \psi_{k}(z_x) - \mu_i^*\nonumber \\
        &\geq 0 \label{eq:feasible_point_2},
    \end{align}
    where the first inequality follows from the Lipschitz property of the functions $\psi_k(.)$ and $\phi^*(x) = \phi_N^*(z_x)$, and \eqref{eq:feasible_point_2} follows since $z_x \in \mathcal{S}_N$ and \eqref{eq:approximation_constraints}.

    Thus, the vector $(\phi^*, \mathbf{\lambda}_1^*, \ldots, \mathbf{\lambda}_m^*, \mathbf{\mu}^* - \epsilon \mathbf{1}^T\mathbf{\lambda})$, where $\mathbf{\lambda} = (\mathbf{\lambda}_1^*, \ldots, \mathbf{\lambda}_m^*)$, is a feasible point for the infinite dimensional dual problem in \eqref{eq:minmax_infinite_dual_m_ary}, and
    \begin{align}
         \gamma &\leq \Gamma^\eta(\phi^*, \mathbf{\lambda}_1^*, \ldots, \mathbf{\lambda}_m^*, \mathbf{\mu}^* - \epsilon \mathbf{1}^T\mathbf{\lambda}) \nonumber\\
        &= \Gamma_N^{ \eta + \epsilon}(\phi_N^*, \mathbf{\lambda}_1^*, \ldots, \mathbf{\lambda}_m^*, \mathbf{\mu}^*)\nonumber\\
        &= \gamma^D_{N,\epsilon}, \label{eq:eqaulity_of_objectives}
    \end{align}
    where the last step follows from the definitions of the functions $\Gamma^\eta$ and $\Gamma_N^{ \eta + \epsilon}$.

     Consider the original minmax problem with an expanded radius around the uncertainty sets as
    \begin{equation}\label{eq:primal_infinte_expanded}
    \begin{split}
         \inf_{\phi} &\sup_{P_i \in \mathcal{P}^{\eta + \epsilon}_i, i \in [m]}  1 - \sum_{i=1}^m\int_\mathcal{X} \phi_i(x) dP_i(x) \\ 
        \text{s.t.} \;\; & \left| \int_\mathcal{X}  \psi_k (x) dP_i(x) - E_{\hat{Q}_i}[{\psi_k}] \right| \leq \eta + \epsilon,\;\; k \in [K]\\
        & \int_\mathcal{X} dP_i(x) = 1, \;\; i \in [m],   
        % & 0 \leq p_i(z_j) \leq 1, \;\;\; j = 1, \ldots, N , \;\;\; i =0,1 \\
        % & 0 \leq \phi(z_j) \leq 1, \;\;\; j = 1, \ldots, N , \;\;\; i =0,1,
    \end{split}
\end{equation}
with optimal value $\gamma_\epsilon$. 
Note that 
\begin{align}
    \gamma_\epsilon &= \inf_{\phi} \sup_{P_i \in \mathcal{P}^{\eta + \epsilon}_i, i \in [m]}  P_E(\phi; P_1, \ldots,  P_m) \nonumber\\
    &= \sup_{P_i \in \mathcal{P}^{\eta + \epsilon}_i, i \in [m]}  \inf_{\phi} P_E(\phi; P_1, \ldots,  P_m) \label{eq:comparison_infinite_approx_objective_1} \\
    &\geq \sup_{P_{i,N} \in \mathcal{P}^{\eta + \epsilon}_{i,N}, i \in [m]}  \inf_{\phi_N} P_E(\phi_N; P_{1,N}, \ldots. P_{m,N}) \label{eq:comparison_infinite_approx_objective_2}\\
    &= \inf_{\phi_N} \sup_{P_{i,N} \in \mathcal{P}^{\eta + \epsilon}_{i,N}, i \in [m]}    P_E(\phi_N; P_{1,N}, \ldots, P_{m,N}) = \gamma_{N,\epsilon},\nonumber
\end{align}
where the first inequality follows from the fact that the feasibility sets $\mathcal{P}^{\eta + \epsilon}_i$ in \eqref{eq:comparison_infinite_approx_objective_1} are larger than the feasibility sets in \eqref{eq:comparison_infinite_approx_objective_2}. 
Thus, it is clear to see that
    \begin{equation}\label{eq:gamma_ineq}
        \gamma_\epsilon \geq \gamma_{N,\epsilon} \geq \gamma.
    \end{equation}

We will now show that 
    \begin{equation}
        \lim_{\epsilon \to 0} \gamma_\epsilon = \gamma.
    \end{equation}
    Note that the dual of \eqref{eq:primal_infinte_expanded} can be constructed as in Theorem \ref{thm:m_ary_infinite_dual} as follows:
     \begin{equation}\label{eq:minmax_infinite_expanded_dual}
        \begin{split}
            \inf_{\substack{\phi, \\ \lambda^\ell_{i,k},\lambda^u_{i,k} \geq 0, k \in [K], i\in [m] \\ \mu_1, \ldots, \mu_m \in \mathbb{R}}} &1 - \sum_{i=1}^m\sum_{k=1}^K \lambda_{i,k}^\ell (a_{0,k} - \epsilon) + \sum_{i=1}^m\sum_{k=1}^K \lambda_{i,k}^u (b_{0,k} + \epsilon)  - \sum_{i=1}^m \mu_i, \\
            \text{s.t.} \;\; & \frac{1}{m}\phi_i(x) - \sum_{k=1}^K \lambda^\ell_{i,k} \psi_{k}(x) + \sum_{k=1}^K \lambda^u_{i,k}\psi_{k}(x) - \mu_i \geq 0, \;\; \forall x \in \mathcal{X},  \;\;\; i \in [m],   \\
            & \sum_{i=1}^m \phi_i(x) = 1, i \in [m]. 
        \end{split}
    \end{equation}

    From the definition of \eqref{eq:define_Gammas}, we have that  
    \begin{equation}
    \begin{split}
    \Gamma^{\eta + \epsilon}(\phi_N, \mathbf{\lambda}_1, \ldots, \mathbf{\lambda}_m, \mathbf{\mu}) &= 1 - \sum_{i=1}^m\sum_{k=1}^K \lambda_{i,k}^\ell (a_{0,k} - \epsilon) + \sum_{i=1}^m\sum_{k=1}^K \lambda_{i,k}^u (b_{0,k} + \epsilon)  - \sum_{i=1}^m \mu_i    
    \end{split}
     \end{equation}
   is the objective of the dual of the extended problem in \eqref{eq:minmax_infinite_expanded_dual}. Let $\Phi, \Lambda_1, \ldots, \Lambda_m, \mathcal{U}$ denote the feasibility sets for the variables $\phi, \mathbf{\lambda}_1, \ldots, \mathbf{\lambda}_m, \mathbf{\mu}$. Note that the feasibility sets for the dual of the original problem in \eqref{eq:minmax_infinite_dual_m_ary} are the same.
   Define
   \begin{equation}
       g(\eta) = \inf_{\substack{\phi \in \Phi, \\ \mathbf{\lambda_i} \in \Lambda_i, i \in [m], \\ \mathbf{\mu} \in \mathcal{U}}} \Gamma^{\eta}(\phi, \mathbf{\lambda}_1, \ldots, \mathbf{\lambda}_m, \mathbf{\mu}).
   \end{equation}
    Note that 
$
            \gamma_{\epsilon} = g(\eta + \epsilon) 
$
    and 
$
            \gamma = g(\eta).
    $
    We will show that $g(\eta)$ is a convex function in $\eta$. Indeed, let $\mathbf{d}^1 = (\phi^1, \mathbf{\lambda}_1^1, \ldots, \mathbf{\lambda}_m^1, \mathbf{\mu}^1)$ and $\mathbf{d}^2 = (\phi^2, \mathbf{\lambda}_1^2, \ldots, \mathbf{\lambda}_m^2, \mathbf{\mu}^2)$ be solutions to $g(\eta^1)$ and $g(\eta^2)$, respectively. Then, for $0 < \alpha < 1$, it is easy to see that $\alpha\mathbf{d}^1 + (1-\alpha)\mathbf{d}^2$ is a feasible solution for the minimization in $g(\alpha\eta^1 + (1-\alpha)\eta^2)$ as well, and 
    \begin{align}
        &g(\alpha\eta^1 + (1-\alpha)\eta^2) \nonumber \\ 
        &\leq 1 - \sum_{i=1}^m\sum_{k=1}^K (\alpha{\lambda^1}_{0,k}^\ell + (1-\alpha){\lambda^2}_{0,k}^\ell) a_{0,k} + \sum_{i=1}^m \sum_{k=1}^K (\alpha{\lambda^1}_{0,k}^u + (1-\alpha){\lambda^2}_{0,k}^u)b_{0,k}
        \nonumber \\  
        & - \sum_{i=1}^m \sum_{k=1}^K (\alpha{\lambda^1}_{1,k}^\ell + (1-\alpha){\lambda^2}_{1,k}^\ell) a_{1,k} + \sum_{i=1}^m \sum_{k=1}^K (\alpha{\lambda^1}_{1,k}^\ell + (1-\alpha){\lambda^2}_{1,k}^\ell) b_{1,k}  \nonumber \\
        &- \alpha\sum_{i=1}^m{\mu}_i^1 + (1-\alpha)\sum_{i=1}^m{\mu}_i^2\nonumber \\
        &= \alpha g(\eta^1) + (1 - \alpha)g(\eta^2).
    \end{align}
    Thus, $g(\eta)$ is convex in $\eta$ on the open interval $(0, \eta_\mathrm{max})$, and hence Lipschitz on the closed interval $[\Delta, \eta_0] \subset (0, \eta_\mathrm{max})$. This implies that $g(\eta)$ is continuous on the interval $[\Delta, \eta_0]$, and hence $\lim_{\epsilon \to 0} g(\eta + \epsilon) = g(\eta)$, or equivalently, 
    \begin{equation}
        \lim_{\epsilon \to 0} \gamma_{\epsilon} = \gamma,
    \end{equation}
    and thus, 
    \begin{equation}
        \lim_{\epsilon \to 0} \gamma_{N,\epsilon} = \gamma.
    \end{equation}
    Additionally, from \eqref{eq:gamma_ineq}, the error due to the approximation can be bounded as:
    \begin{align}
        \gamma_{N,\epsilon} - \gamma &\leq \gamma_{\epsilon} - \gamma \leq L_0 \epsilon,
    \end{align}
    where $L_0$ is the Lipschitz constant of the function $g(\eta)$.    
    \end{proof}

    We have proved the convergence of the optimal value of the approximation problem $\gamma_{N,\epsilon}$ to the minmax Bayes error $\gamma$. We will now construct a robust detection test from the solution of the approximation problem in \eqref{eq:minmax_infinite_relaxation_dual_m_ary}. Note that $P^*_{1,N}, \ldots, P^*_{m,N}$ are the solutions to the maximization problem in \eqref{eq:minmax_infinite_m_ary_relaxation} with the optimal $\phi_N^*$ from its corresponding dual problem, i.e., $(\phi_N^*, P^*_{1,N}, \ldots, P^*_{m,N})$ is a saddle point for the relaxed problem in \eqref{eq:minmax_infinite_m_ary_relaxation}. Recall the partition $\{\mathcal{A}_1, \ldots,\mathcal{A}_N\}$ defined by the set $\mathcal{S}_N$ on $\mathcal{X}$ such that for any $j \in [N]$, if $x \in \mathcal{A}_j$, then
\begin{equation}
    \|x - z_j\| \leq \epsilon.
\end{equation}
    
    In order to construct a robust detection test, we extend these discrete distributions to the whole space $\mathcal{X}$ as $P_1^*, \ldots, P_m^*$. This can be done through kernel smoothing as:
    \begin{equation}
        p_i^*(x) = \sum_{j=1}^N k(x,z_j) p_{i,N}^*(z_j).
    \end{equation}
    Popular kernels used for smoothing are k-Nearest Neighbor (kNN) kernels, Gaussian kernels and the Epanechnikov (parabola) kernel. 
    Define the pair-wise likelihood ratios as follows:
    \begin{equation}
        \ell_i(x) = \frac{p_i^*(x)}{p_1^*(x)}.
    \end{equation}
    The proposed test for the Bayesian m-ary robust testing problem is defined as follows. For $x \in \mathcal{X}$, if $\mathop{\arg\max}\limits_{i \in [m]} \ell_i(x) = \{i^*\}$ (i.e., there are no tie breaks), then
    \begin{equation}\label{eq:m-ary_robust_test}
        \phi_{i}^*(x) = 
        \begin{cases}
            1 \;\;\; \text{ if } i = i^* \\
            0 \;\;\; \text{ otherwise}.
        \end{cases}
    \end{equation}
    In case of a tie, i.e., $\mathop{\arg\max}\limits_{i \in [m]} \ell_i(x)$ is not a singleton set, $\phi^*(x) = \phi_N^*(z_x)$ for $z_x \in \mathop{\arg\min}\limits_{z_j \in \mathcal{S}_N}\|x - z_j\|$.

    For instance, the 1-NN kernel smoothing test can be defined as follows. The mass $p^*_{i,N}(z_j)$ is distributed uniformly on all points in the corresponding set in the partition $\mathcal{A}_j$ for $j \in [N]$, i.e., for $x \in \mathcal{X}$
    \begin{equation}\label{eq:extend_distributions}
         p_i^*(x) = \sum_{j=1}^N \frac{p^*_{i,N}(z_j) \identityf{x \in \mathcal{A}_j}}{\int_{\mathcal{A}_j} dx}.
    \end{equation}
    In this case, the proposed test reduces to a 1-NN test, where for $x \in \mathcal{X}$
    \begin{equation}\label{eq:1NN_test}
        \phi^*(x) = \phi_{N}^*(z_x), \;\;\; z_x \in \mathop{\arg\min}\limits_{z_j \in \mathcal{S}_N}\|x - z_j\|, 
    \end{equation}
    since $\phi_N^*$ is the likelihood ratio test with $P_{1,N}^*, \ldots, P_{m,N}^*$ when there are no tie breaks.
    
    We present the following result that bounds the worst-case error of the  1-NN kernel smoothing test in \eqref{eq:1NN_test} over all distributions in the uncertainty sets with respect to the optimal minmax Bayes error. 
   \begin{theorem}\label{thm:robust_test}
    With the optimal value of the minmax Bayes formulation as $\gamma$, let the robust test $\phi^*$ with 1-NN kernel smoothing be as defined in \eqref{eq:1NN_test}. Then, 
    \begin{equation}
        \sup_{P_i \in \mathcal{P}_i^\eta, i \in [m]} P_E(\phi^*; P_1, \ldots, P_m) - \gamma \leq L_0 \epsilon.
    \end{equation}
\end{theorem}

    \begin{proof}
        Recall that
    \begin{align}
        \gamma_{N,\epsilon} &= \inf_{\phi_N}\sup_{P_{i,N} \in \mathcal{P}^{\eta + \epsilon}_{i,N}, i \in [m]} P_E(\phi_N; P_{1,N}, \ldots, P_{m,N})\nonumber \\
        &= \sup_{P_{i,N} \in \mathcal{P}^{\eta + \epsilon}_{i,N}, i \in [m]} \inf_{\phi_N} P_E(\phi_N; P_{1,N}, \ldots, P_{m,N}).
    \end{align}
    Since $(\phi_N^*, P^*_{1,N}, \ldots,  P^*_{m,N})$ is a saddle point for the above optimization problem, for any  $P_{i,N}\in \mathcal{P}^{\eta + \epsilon}_{i,N}, i \in [m]$ and any $\phi_N$,
    \begin{align}\label{eq:saddle_point_relaxation}
        P_E(\phi^*_N; P_{1,N}, \ldots, P_{m,N}) &\leq P_E(\phi^*_N; P^*_{1,N}, \ldots, P^*_{m,N}) \nonumber\\
        &\leq P_E(\phi_N; P^*_{1,N}, \ldots, P^*_{m,N}).
    \end{align}

    Consider the distributions $P_1^*, \ldots, P_m^*$, and the decision rule $\phi^*$ extended to the entire alphabet $\mathcal{X}$ as in \eqref{eq:extend_distributions} and \eqref{eq:1NN_test}, respectively. 
    %Note that since $\phi^*$ is a likelihood ratio test with $P_0^*, P_1^*$, 
    % \begin{equation}
    %     P_E(\phi^*, P_0^*, P_1^*) \leq P_E(\phi, P_0^*, P_1^*)
    % \end{equation}
    % for any $\phi$. 
    For any $P_i \in \mathcal{P}_i^\eta, i \in [m]$, we can construct $P_{1,N}, \ldots,  P_{m,N}$ on $\mathcal{S}_N$ as follows:
    \begin{align}
        p_{i,N}(z_j) &= P_i(\mathcal{A}_j),
    \end{align}
    where $\mathcal{A}_j$ is the partition containing the point $z_j$. Then for $i \in [m]$ and any $k \in [K]$, 
    \begin{align}
        &\left| \sum_{j=1}^N p_{i,N}(z_j) \psi_k (z_j) - E_{\hat{Q}_i}[{\psi_k}] \right| \nonumber \\
        &\leq  \left| E_{P_i}[{\psi_k}] - E_{\hat{Q}_i}[{\psi_k}]\right| +  \left| \sum_{j=1}^N p_{i,N}(z_j) \psi_k (z_j) - E_{P_i}[{\psi_k}] \right|\nonumber \\
        &\leq \eta + \epsilon, 
    \end{align}
    where the last inequality follows from the Lipschitz property of $\psi_K$ and the fact that $P_i \in \mathcal{P}_i^\eta, i \in [m]$. Thus, we have that $P_{i,N} \in \mathcal{P}_{i,N}^{\eta + \epsilon}$. In addition, 
    \begin{align}
        E_{P_{i,N}}[\phi^*_N] &= \sum_j p_{i,N}(z_j) \phi_N^*(z_j) \nonumber\\
        &= \sum_j \int_{\mathcal{A}_j} \phi_N^*(z_j) p_i(x) dx \nonumber\\
        &= \sum_j \int_{\mathcal{A}_j} \phi^*(x) p_i(x) dx \nonumber\\
        &=  \int_{\mathcal{X}} \phi^*(x) p_i(x) dx = E_{P_i}[\phi^*].
    \end{align}
    It follows that 
    \begin{equation}
        P_E(\phi^*; P_1, \ldots, P_m) = P_E(\phi_N^*; P_{1,N}, \ldots, P_{m,N}).
    \end{equation}
    Combining the above with \eqref{eq:saddle_point_relaxation}, we get that 
    \begin{equation}
        P_E(\phi^*; P_1, \ldots, P_m) \leq P_E(\phi^*_N; P^*_{1,N}, \ldots, P^*_{m,N}) = \gamma_{N,\epsilon}.
    \end{equation}
    Thus, using the result from Theorem \ref{thm:convergence}, for any $P_i \in \mathcal{P}_i^\eta, i \in [m]$, 
    \begin{equation}
        P_E(\phi^*; P_1, \ldots, P_m) \leq \gamma + L_0 \epsilon.
    \end{equation}
    Taking supremum over all distributions in the uncertainty set, we have that 
    \begin{equation}
        \sup_{P_i \in \mathcal{P}_i^\eta, i \in [m]} P_E(\phi^*; P_1, \ldots, P_m) \leq \gamma + L_0 \epsilon.
    \end{equation}
    \end{proof}

\begin{remark} The key idea we used in the proof of Theorem~\ref{thm:robust_test} is that the 1-NN test at any point $x$ is the same as that at the point $z_x$ in the $\epsilon$-net $\mathcal{S}_N$ closest to $x$, and thus the Lipschitz property of the moment functions can be used to bound the error probability of the 1-NN test for all distributions in the uncertainty sets. However, for other kernels such as the Gaussian kernel or a k-NN kernel with $k \geq 2$, the test at any point $x$ depends on not just $z_x$, but additional points from $\mathcal{S}_N$ as well. This makes it challenging to bound the error probability in a similar manner as in the proof of Theorem~\ref{thm:robust_test}, since the Lipschitz property of the moment functions is no longer enough to obtain such a bound.  
\end{remark}

The results provided so far are for testing a single observation $x$. As in classical multi-hypothesis testing, for a batch of i.i.d. observations $\{x_j\}_{j=1}^s$, we propose a test that chooses the hypothesis with the maximum sum of log-likelihood ratios $\sum_{j=1}^s \log{\ell_i(x_j)}$. Note that this test is not necessarily optimal; we evaluate the performance on i.i.d. samples empirically in Section~\ref{sec:exp}.

\section{Experimental Results} \label{sec:exp}
In this section, we provide some simulation results for both synthetic data and real data. Recall that for the infinite alphabet case, we need to extend the discrete distributions $P_{i, N}^*$ to the whole space to get $P_i^*$. In our experiments, we compare different kernels to extend the discrete distributions to the whole space: the k-NN kernel for different values of $k$, the Gaussian kernel and the parabola kernel. The k-NN kernel smoothing with uniform averaging is given by:
\begin{equation}
    k(x,z_j) = 
    \begin{cases}
        \frac{1}{k} \;\;\;\; \text{if } j \in \mathcal{I}_x \\
        0 \;\;\;\; \text{otherwise},
    \end{cases}
\end{equation}
where $\mathcal{I}_x$ is the set of indices of the $k$ nearest neighbours of $x$ in $\mathcal{S}_N$. The Gaussian kernel with bandwidth $h$ is given by 
\begin{equation}
    k(x,z_j) = \frac{1}{\sqrt{2\pi }h}\exp{\left(\frac{-\|x - z_j\|^2}{2h^2}\right)}.
\end{equation}
The parabola kernel with bandwidth $h$ is given by
\begin{equation}
    k(x,z_j) = 
    \begin{cases}
        0.75*(1 - \frac{\|x - z_j\|^2}{h^2}) \;\;\; \text{if } \frac{\|x - z_j\|}{h} \leq 1 \\
        0 \;\;\;\;\;\;\;\;\;\;\;\;\;\;\;\;\;\;\;\;\;\;\;\;\;\;\;\;\;\; \text{otherwise}.
    \end{cases}
\end{equation}
Additionally, we compare our proposed test with a heuristic test defined as follows. 
Let $\hat{P}_s = \frac{1}{s}\sum_{j=1}^s \delta_{{x}_{j}}$ be the empirical distribution of the batch samples $x^s = \{x_j\}_{j=1}^s$. Consider the test statistics 
\begin{align}
    T_i(x^s) = \sum_{k=1}^K \left| E_{\hat{P}_s}[\psi_k] - E_{\hat{Q}_i}[\psi_k] \right|^2. 
\end{align}
We propose a heuristic test for i.i.d. observations that chooses the hypothesis 
\begin{equation}\label{eq:batch_test}
    i^* \in \mathop{\arg\min}_{i\in [m]} T_i(x^s),
\end{equation}
with ties broken arbitrarily. 

In Appendix \ref{sec:direct}, we show that the proposed heuristic test is exponentially consistent. Therefore, it serves as a good benchmark to compare with our proposed test. 

\begin{remark}
    It is to be noted that different constructions of uncertainty sets have their own advantages and disadvantages, and choosing a particular construction depends on the application at hand. It is not meaningful to compare the empirical performances of robust tests proposed for different uncertainty sets, as it might not be possible to ascertain that the sets constructed using different methods are equivalent in some sense to ensure a fair comparison.
\end{remark}  %\vvv{[This remark seems out of place here. Should it be moved to the numerical results section.]}

\subsection{Synthetic Data}
In this section, we focus on synthetic data experiments. We first compare the performance of our proposed test with the heuristic test in a setting with three hypotheses. We use different values of $k$ in the k-NN kernel smoothing method to examine the impact of $k$. For the Gaussian kernel smoothing method, we use a Gaussian kernel with bandwidth 1, and for the parabola kernel, we use a bandwidth of 0.5. We use 10 samples from the Gaussian distribution $\mathcal{N}([0, 0], [[1,0],[0,1]])$, 10 samples from the Gaussian distribution $\mathcal{N}([-1, -1], [[1,0],[0,1]])$ and 10 samples from the Gaussian distribution $\mathcal{N}([1, 1], [[1,0],[0,1]])$ as the training samples. All the training data from the three hypotheses (30 in total) are used to construct the discretization set $\mathcal{S}_N$. We then use the true distributions to evaluate the performance of the proposed algorithms. We use the coordinate-wise first and second moments of the training data to construct uncertainty sets. We plot the log of the error probability as a function of the testing sample size. It can be seen from Fig.~\ref{syn_comparison_3} that the parabola kernel smoothing method for our proposed test has the best performance. Moreover, the error probability decreases with an increase in $k$ in the $k$-nearest neighbor method. The 9-NN, 12-NN, Gaussian and parabola kernel smoothing methods perform better than the heuristic test.

\begin{figure}[htb]
	\centering 
		\includegraphics[width=0.6\linewidth]{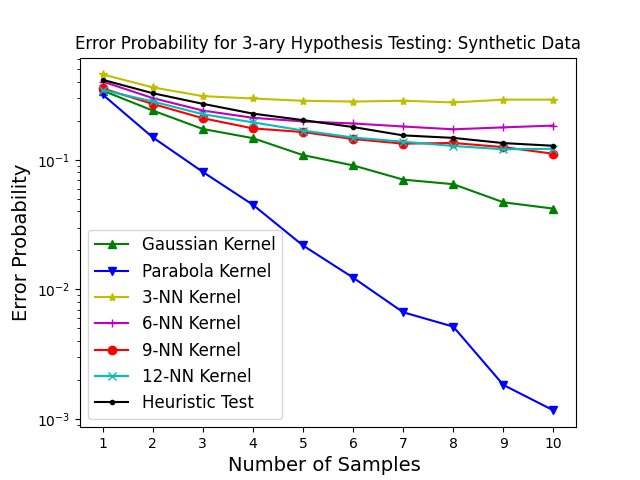}
	\caption{Comparison of the Robust Test and the Heuristic Test for Three Hypothesis: Synthetic Data}
	\label{syn_comparison_3}
\end{figure}

We now evaluate the performance of our method for four hypotheses. The first three hypotheses are taken to be the same as in the previous case, and for the uncertainty set $\mathcal{P}_4$, we use 10 samples from the Gaussian distribution $\mathcal{N}([2, 2], [[1,0],[0,1]])$ as the training samples. From Fig.~\ref{syn_comparison_4}, it can be seen that the 9-NN, 12-NN, Gaussian and parabola kernel smoothing methods  perform better than the heuristic test. We observe that for the $k$-nearest neighbor smoothed test, a small value of $k$ may not lead to good performance as the sample size increases.
\begin{figure}[htb]
	\centering 
		\includegraphics[width=0.6\linewidth]{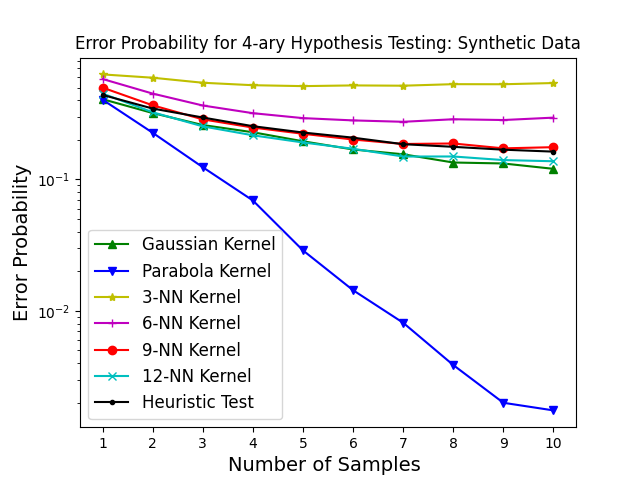}
	\caption{Comparison of the Robust Test and the Heuristic Test for Four Hypothesis: Synthetic Data}
	\label{syn_comparison_4}
\end{figure}

\subsection{Real Data}
In this section, we provide some results for real data experiments to compare the performance of different tests.  We use a dataset collected with the Actitracker system \cite{lockhart2011design, kwapisz2011activity, gary2012impact} to form the hypotheses. For the case with three hypotheses, we use 25 samples of the walking data from user 669, 25 samples of the jogging data from user 685 and 25 samples of the sitting data from user 594 to construct the uncertainty sets. 
We use the coordinate wise first and second moments as the constraints function. We plot the error probability as a function of the testing sample size. From Fig.~\ref{real_comparison_3}, we have that our proposed test with k-NN, Gaussian and parabola smoothing performs better than the heuristic test.

\begin{figure}[htb]
	\centering 
		\includegraphics[width=0.6\linewidth]{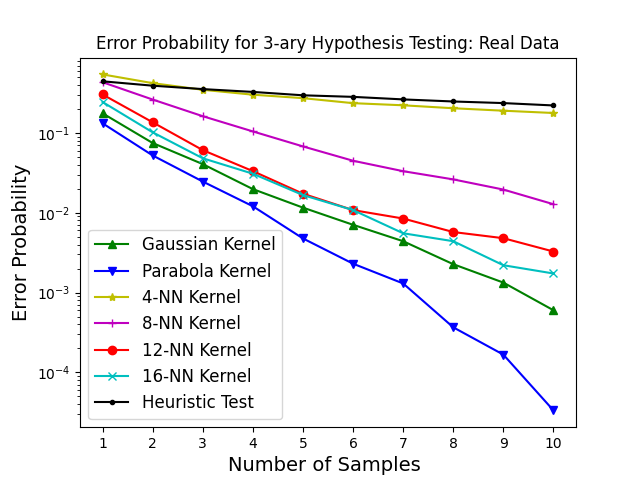}
	\caption{Comparison of the Robust Test and the Heuristic Test for Three Hypothesis: Real Data}
	\label{real_comparison_3}
\end{figure}

We also evaluate the performance of the proposed methods for four hypotheses. We use 25 samples of the walking data from user 669, 25 samples of the jogging data from user 685, 15 samples of the sitting data from user 594 and 15 samples of the lying down data from user 1603 to construct the uncertainty sets. We use the coordinate-wise first and second moments as the constraints function. In Fig.~\ref{real_comparison_4}, we plot the error probability as a function of the testing sample size. It can be seen that the $k$-NN, Gaussian and parabola smoothed tests perform better than the heuristic test.
\begin{figure}[htb]
	\centering 		\includegraphics[width=0.6\linewidth]{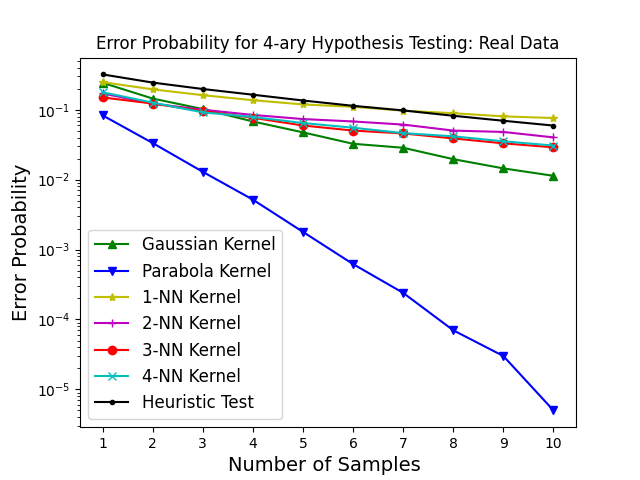}
	\caption{Comparison of the Robust Test and the Heuristic Test for Four Hypothesis: Real Data}
	\label{real_comparison_4}
\end{figure}

\section{Conclusion}

In this paper, we studied the robust multi-hypothesis ($m \geq 2$) testing problem with moment constrained uncertainty sets. In the case of robust binary hypothesis testing, we provided a counter-example to show that the Joint Stochastic Boundedness property might not hold for moment-constrained uncertainty sets, which thus  motivates the study of optimization based approaches. For the general setting with $m \geq 2$, we characterized the optimal minmax test in the finite alphabet case. In the infinite alphabet case, we proposed an efficient finite-dimensional minmax reformulation to the intractable infinite-dimensional optimization problem through a discretization approach, and provided convergence guarantees of the optimal value of the reformulation to the original minmax problem. We proposed a robust test constructed from smoothing the discrete saddle-point worst-case distributions of the reformulation problem to the entire space. For the 1-NN smoothed test, we provided guarantees on the worst-case error of the proposed test over all distributions in the uncertainty sets. Through experiments, we demonstrated the exponential consistency of our proposed test for different smoothing methods such as the k-NN kernel, Gaussian kernel and the parabola kernel.

\bibliographystyle{IEEEtran}
\bibliography{ref.bib}

\appendix
\subsection{Exponential Consistency of the Heuristic Test}\label{sec:direct}
\begin{theorem}
    The test in \eqref{eq:batch_test} is exponentially consistent. 
\end{theorem}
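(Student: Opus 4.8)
The plan is to establish exponential consistency in the standard worst-case sense: I will show that there is a constant $c>0$ with $\sup_{P_0\in\mathcal P_0^\eta} E_{P_0}[\phi_s(x_1^s)]\le e^{-cs}$ and $\sup_{P_1\in\mathcal P_1^\eta} E_{P_1}[1-\phi_s(x_1^s)]\le e^{-cs}$ for all large $s$, which immediately yields an exponentially decaying Bayes error. The two error types are handled symmetrically, so I describe the Type~I (i.e.\ $H_0$) case.

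First, a concentration step. Since $\mathcal X\subseteq[0,1]^d$ is compact and each $\psi_k$ is continuous, it is bounded, say with range $R_k$; hence under any fixed $P_0\in\mathcal P_0^\eta$ the empirical moments $E_{\hat P_s}[\psi_k]=\frac1s\sum_{j=1}^s\psi_k(x_j)$ are averages of i.i.d.\ bounded variables concentrating around $m_k:=E_{P_0}[\psi_k]$. By Hoeffding's inequality and a union bound over $k\in[K]$, the ``good'' event $\mathcal G_t=\{\max_k|E_{\hat P_s}[\psi_k]-m_k|\le t\}$ satisfies $\Pr(\mathcal G_t^c)\le 2K\exp(-2st^2/\max_k R_k^2)$, exponentially small for every fixed $t>0$. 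The key structural observation is that, writing $u$ for the vector of empirical moments, $\mu^{(i)}$ for the vector of training moments $E_{\hat Q_i}[\psi_k]$, and $d=\mu^{(1)}-\mu^{(0)}$, the statistic is affine in $u$: $T=\|u-\mu^{(0)}\|^2-\|u-\mu^{(1)}\|^2=2\langle u,d\rangle+\|\mu^{(0)}\|^2-\|\mu^{(1)}\|^2$. Consequently $T$ is Lipschitz in $u$ with constant at most $2\|d\|_1$ in the sup-norm, so on $\mathcal G_t$ we have $|T(x_1^s)-\bar T(P_0)|\le 2\|d\|_1\,t$, where $\bar T(P_0)=2\langle m-\mu^{(0)},d\rangle-\|d\|^2$ is the population value of the statistic.

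The crux is to produce a uniform strictly negative margin, $\sup_{P_0\in\mathcal P_0^\eta}\bar T(P_0)\le-2\alpha$ for some $\alpha>0$ (and, symmetrically, $\inf_{P_1\in\mathcal P_1^\eta}\bar T(P_1)\ge 2\alpha$). Geometrically, $\bar T(P_0)<0$ says that the moment vector $m$ lies strictly on the $\mu^{(0)}$ side of the perpendicular bisector of $\mu^{(0)}$ and $\mu^{(1)}$; I would therefore show that this bisector hyperplane strictly separates the two moment boxes $\{z:\|z-\mu^{(i)}\|_\infty\le\eta\}$ and read off $\alpha$ from the margin. This is the main obstacle, and the delicate point is that $\bar T$ couples a Euclidean inner product with the $\ell_\infty$ constraint $\|m-\mu^{(0)}\|_\infty\le\eta$: the bound $\langle m-\mu^{(0)},d\rangle\le\eta\|d\|_1$ must be forced below $\tfrac12\|d\|^2$, so I would verify with care that the admissible range of $\eta$ together with the non-overlap assumption $\eta<\eta_\mathrm{max}$ delivers this inequality. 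It is precisely here---not in the routine concentration estimate---that the argument is tight, and if needed I would phrase the required separation directly in the combined $(\ell_\infty,\ell_2)$ sense used by $T$.

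Finally, I assemble the pieces. Fixing $t=\alpha/(2\|d\|_1)$ (independent of $s$), on $\mathcal G_t$ we obtain $T(x_1^s)\le\bar T(P_0)+2\|d\|_1 t\le-2\alpha+\alpha=-\alpha<0$, so $\phi_s=0$ and no Type~I error occurs. Hence $\{T\ge0\}\subseteq\mathcal G_t^c$ and $\sup_{P_0\in\mathcal P_0^\eta}E_{P_0}[\phi_s]\le 2K\exp(-s\alpha^2/(2\|d\|_1^2\max_k R_k^2))$. The Type~II bound follows identically from the positive population margin, and averaging the two exponentially small error probabilities establishes exponential consistency of the Bayes error.
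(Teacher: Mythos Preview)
Your overall strategy---write $T$ as an affine function of the empirical moment vector, concentrate the latter via a sub-Gaussian bound, and then appeal to a strictly negative population margin $\bar T(P_0)<0$ uniformly over $\mathcal P_0^\eta$---is exactly the paper's strategy; the paper simply packages the concentration as a single McDiarmid bound on $F(x_1,\dots,x_s)=2\langle d,u\rangle$ rather than your coordinate-wise Hoeffding plus union bound. So the routes coincide, and the only substantive question is the margin step you flag.

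That step, however, does not follow from the non-overlap assumption $\eta<\eta_{\max}=\tfrac12\|d\|_\infty$ alone. As you note, $\sup_{P_0\in\mathcal P_0^\eta}\bar T(P_0)\le 2\eta\|d\|_1-\|d\|_2^2$, and this bound is attained when the moment image is rich. Negativity therefore forces $\eta<\|d\|_2^2/(2\|d\|_1)$, but $\|d\|_2^2\le\|d\|_\infty\|d\|_1$ always, so $\|d\|_2^2/(2\|d\|_1)\le\tfrac12\|d\|_\infty$, with strict inequality whenever the $|d_k|$ are not all equal. Concretely: take $K=2$, $\psi_k(x)=x_k$ on $\mathcal X=[0,1]^2$, $\mu^{(0)}=(0,0)$, $\mu^{(1)}=(1,0.1)$, and $\eta=0.48<\eta_{\max}=0.5$. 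For $P_0=\delta_{(0.47,0.47)}\in\mathcal P_0^\eta$ one computes $\bar T(P_0)=2(0.47+0.047)-1.01=0.024>0$, so under this $P_0$ the statistic $T(x_1^s)\to 0.024$ almost surely and the Type~I error tends to $1$, not $0$. The paper's own proof asserts the same positivity (``since $P_0\in\mathcal P_0^\eta$, $\sum_k(E_{\hat Q_1}[\psi_k]-E_{P_0}[\psi_k])^2-(E_{\hat Q_0}[\psi_k]-E_{P_0}[\psi_k])^2>0$'') without justification and runs into the identical obstruction. What is actually needed is the stronger hypothesis $\eta<\|d\|_2^2/(2\|d\|_1)$---equivalently, that the perpendicular bisector of $\mu^{(0)}$ and $\mu^{(1)}$ strictly separates the two $\ell_\infty$ moment boxes---under which both your argument and the paper's go through verbatim.
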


\begin{proof}
We consider the error probability under hypothesis $H_1$. The following results can be easily generalized to all $m$ hypotheses.

For any $P_1 \in \mathcal{P}^\eta_1$, we have that for any $i = 2, \cdots, m$,
\begin{align}
    &P_{1} \left( \sum_{k=1}^K \left| E_{\hat{P}_s}[\psi_k] - E_{\hat{Q}_1}[\psi_k] \right|^2 \geq  \sum_{k=1}^K \left| E_{\hat{P}_s}[\psi_k] - E_{\hat{Q}_i}[\psi_k] \right|^2 \right) \nonumber\\
    &= P_{1}\left(2  \sum_{k=1}^K (E_{\hat{Q}_i}[\psi_k] - E_{\hat{Q}_1}[\psi_k])E_{\hat{P}_s}[\psi_k] \geq \sum_{k=1}^K (E_{\hat{Q}_i}^2[\psi_k] - E_{\hat{Q}_1}^2[\psi_k])  \right) \nonumber\\
    &= P_{1}\left( F_i(x_1, \ldots, x_s) - E[F_i(x_1, \ldots, x_s)] \geq \sum_{k=1}^K (E_{\hat{Q}_i}[\psi_k] - E_{{P}_1}[\psi_k])^2 - (E_{\hat{Q}_1}[\psi_k] - E_{{P}_1}[\psi_k])^2  \right),
\end{align}
where $F_i(x_1, \ldots, x_s) = 2  \sum_{k=1}^K (E_{\hat{Q}_i}[\psi_k] - E_{\hat{Q}_1}[\psi_k])E_{\hat{P}_s}[\psi_k] $. Recall that
\begin{equation}
    \max\limits_{k=1,\ldots,K} \sup\limits_{x \in \mathcal{X}} \psi_k(x) \leq M.
\end{equation}
Thus, $F_i(x_1, \ldots, x_s)$ satisfies the bounded differences property, i.e., 
\begin{align}
    &\sup\limits_{x \in \mathcal{X}} \left| F_i(x_1, \ldots, x_i, \ldots, x_s) - F_i(x_1, \ldots, x_i', \ldots, x_s) \right| \nonumber\\
    &=  \sup\limits_{x \in \mathcal{X}} \left| \frac{2}{s} \sum_{k=1}^K (E_{\hat{Q}_i}[\psi_k] - E_{\hat{Q}_1}[\psi_k]) (\psi_k(x_i) - \psi_k(x_i')) \right| \nonumber\\
    &\leq \frac{4M}{s} \sum_{k=1}^K \left| E_{\hat{Q}_i}[\psi_k] - E_{\hat{Q}_1}[\psi_k] \right|.
\end{align}
Also, since $P_1 \in \mathcal{P}^\eta_1$, 
\begin{equation}
    \sum_{k=1}^K (E_{\hat{Q}_i}[\psi_k] - E_{{P}_1}[\psi_k])^2 - (E_{\hat{Q}_1}[\psi_k] - E_{{P}_1}[\psi_k])^2 > 0.
\end{equation}
Thus, using Mcdiarmid's inequality \cite{mcdiarmid1989}, 
\begin{align}
    &P_{1} \left( \sum_{k=1}^K \left| E_{\hat{P}_s}[\psi_k] - E_{\hat{Q}_1}[\psi_k] \right|^2 \geq  \sum_{k=1}^K \left| E_{\hat{P}_s}[\psi_k] - E_{\hat{Q}_i}[\psi_k] \right|^2 \right) \nonumber\\
    &\leq \exp{\left( \frac{-s\left( \sum_{k=1}^K (E_{\hat{Q}_i}[\psi_k] - E_{{P}_1}[\psi_k])^2 - (E_{\hat{Q}_1}[\psi_k] - E_{{P}_1}[\psi_k])^2 \right)^2}{8M^2 \left(\sum_{k=1}^K \left| E_{\hat{Q}_i}[\psi_k] - E_{\hat{Q}_1}[\psi_k] \right| \right)^2} \right)}.
\end{align}
Thus, we have that 
\begin{align}
    &\sup\limits_{P_1 \in \mathcal{P}^\eta_1} P_{1} \left( \sum_{k=1}^K \left| E_{\hat{P}_s}[\psi_k] - E_{\hat{Q}_1}[\psi_k] \right|^2 \geq  \sum_{k=1}^K \left| E_{\hat{P}_s}[\psi_k] - E_{\hat{Q}_i}[\psi_k] \right|^2 \right) \nonumber\\
    &\leq \sup\limits_{P_1 \in \mathcal{P}^\eta_1} \exp{\left( \frac{-s\left( \sum_{k=1}^K (E_{\hat{Q}_i}[\psi_k] - E_{{P}_1}[\psi_k])^2 - (E_{\hat{Q}_1}[\psi_k] - E_{{P}_1}[\psi_k])^2 \right)^2}{8M^2 \left(\sum_{k=1}^K \left| E_{\hat{Q}_i}[\psi_k] - E_{\hat{Q}_1}[\psi_k] \right| \right)^2} \right)} \nonumber\\
    &\leq \exp{\left( \frac{-s\left( \sum_{k=1}^K \left( E_{\hat{Q}_i}[\psi_k] - E_{\hat{Q}_1}[\psi_k] + 2\eta \right) \left( E_{\hat{Q}_i}[\psi_k] - E_{\hat{Q}_1}[\psi_k] \right) \right)^2}{8M^2 \left(\sum_{k=1}^K \left| E_{\hat{Q}_i}[\psi_k] - E_{\hat{Q}_1}[\psi_k] \right| \right)^2} \right)}.
\end{align}
Therefore, we have that 
\begin{align}
&\sup\limits_{P_1 \in \mathcal{P}^\eta_1} P_{1} \left( \phi_s(x^s) \neq 1 \right) \nonumber\\
&\leq \sum_{i=2}^m \exp{\left( \frac{-s\left( \sum_{k=1}^K \left( E_{\hat{Q}_i}[\psi_k] - E_{\hat{Q}_1}[\psi_k] + 2\eta \right) \left( E_{\hat{Q}_i}[\psi_k] - E_{\hat{Q}_1}[\psi_k] \right) \right)^2}{8M^2 \left(\sum_{k=1}^K \left| E_{\hat{Q}_i}[\psi_k] - E_{\hat{Q}_1}[\psi_k] \right| \right)^2} \right)}\nonumber\\
&\leq m\max_{i = 2, \cdots, m} \exp{\left( \frac{-s\left( \sum_{k=1}^K \left( E_{\hat{Q}_i}[\psi_k] - E_{\hat{Q}_1}[\psi_k] + 2\eta \right) \left( E_{\hat{Q}_i}[\psi_k] - E_{\hat{Q}_1}[\psi_k] \right) \right)^2}{8M^2 \left(\sum_{k=1}^K \left| E_{\hat{Q}_i}[\psi_k] - E_{\hat{Q}_1}[\psi_k] \right| \right)^2} \right)}.
\end{align}
This completes the proof.
\end{proof}

% \newpage

% \section{Biography Section}
% If you have an EPS/PDF photo (graphicx package needed), extra braces are
%  needed around the contents of the optional argument to biography to prevent
%  the LaTeX parser from getting confused when it sees the complicated
%  $\backslash${\tt{includegraphics}} command within an optional argument. (You can create
%  your own custom macro containing the $\backslash${\tt{includegraphics}} command to make things
%  simpler here.)
 
% \vspace{11pt}

% \bf{If you include a photo:}\vspace{-33pt}
% \begin{IEEEbiography}[{\includegraphics[width=1in,height=1.25in,clip,keepaspectratio]{}}]{Michael Shell}
% Use $\backslash${\tt{begin\{IEEEbiography\}}} and then for the 1st argument use $\backslash${\tt{includegraphics}} to declare and link the author photo.
% Use the author name as the 3rd argument followed by the biography text.
% \end{IEEEbiography}

% \vspace{11pt}

% \bf{If you will not include a photo:}\vspace{-33pt}
% \begin{IEEEbiographynophoto}{John Doe}
% Use $\backslash${\tt{begin\{IEEEbiographynophoto\}}} and the author name as the argument followed by the biography text.
% \end{IEEEbiographynophoto}

% \vfill

\end{document}